\titlespacing{\section}{10pt}{10pt}{10pt} 
\tikzset{snake it/.style={decorate, decoration=snake}}
\newcommand{\tpmod}[1]{{\@displayfalse\pmod{#1}}}
\newcommand{\Sym}[1]{\mathrm{Sym}({#1})}
\newtheorem{theorem}{Theorem}[section]
\newtheorem{corollary}[theorem]{Corollary}
\newtheorem{lemma}[theorem]{Lemma}
\newtheorem{definition}[theorem]{Definition}
\theoremstyle{definition}
\title{\bf{Finite Coxeter Groups and Generalized Elnitsky Tilings}}
\author{Robert Nicolaides and Peter Rowley }
\begin{document}

\maketitle
\begin{abstract}
In \cite{elnitskyThesis}, Elnitsky constructed three elegant bijections between classes of reduced words for Type $\mathrm{A}$, $\mathrm{B}$ and $\mathrm{D}$ families of Coxeter groups and certain tilings of polygons. This paper presents a particular generalization of this concept to all finite Coxeter groups in terms of embeddings into the symmetric group, which combined with \cite{nicolaidesrowleyBruhat} and \cite{nicolaidesrowleyDeletion} enables tilings to be constructed for all finite Coxeter groups. 
\end{abstract}

\section{Introduction}

Reduced words for elements of a Coxeter group have been extensively investigated, as they are implicated in a host of problems and questions in many diverse areas. For example, fully commutative elements, which are elements such that any reduced word can be obtained from another by commutation of generators, illustrate this vividly. They arise in applications to symmetric functions associated with the Weyl groups of type $\mathrm{B}$ and $\mathrm{D}$ (see \cite{stembridge} and references therein). Further they also give a natural indexing of a basis for the generalized Temperley-Lieb algebras (see \cite{biagiolajouketnadeau}). While fully commutative elements in $\mathrm{Sym}(n)$ have connections to permutations which avoid the pattern $321$ (in one-line notation) and the Catalan numbers. As to the number of reduced words for an element of  $\mathrm{Sym}(n)$, Stanley in \cite{stanley} demonstrated that this can be calculated in terms of particular shaped Young tableaux. We note that this description does not, in general, allow easy enumeration. However, for commutation classes of the longest element in $\mathrm{Sym}(n)$ see \cite{denoncortdanadustin}.

A different perspective on reduced words, expounded in  Elnitsky's PhD dissertation \cite{elnitskyThesis}, lead into the realms of certain types of tilings of $m$-gons. In \cite{elnitsky}  rhombic tilings of $2n$-gons are shown to be in bijection with commutation classes of reduced words in Coxeter groups of type $\mathrm{A}_{n-1}$. Further, similar types of tilings are also given there for types $\mathrm{B}$ and $\mathrm{D}$ Coxeter groups. For a selection of related work see \cite{epty}, \cite{hamanaka}, \cite{nicolaidesrowley}, \cite{tenner1} and \cite{tenner2}.

In this paper we introduce generalized Elinitsky tilings, which extend the work in \cite{elnitsky}.  The central concept is that of an Elnitsky embedding which we define shortly. The main results of this paper are contained in Theorems \ref{lemma E-embedding} and \ref{maintheorem} which show that Elnitsky embeddings lead to tilings. Two further papers \cite{nicolaidesrowleyBruhat} and \cite{nicolaidesrowleyDeletion} then demonstrate how to obtain Elnitsky embeddings for any finte Coxeter group, so yielding tilings for all these groups. In fact, by using faithful permutation representations on the cosets of standards parabolic subgroups of the finite Coxeter group a cornucopia of tilings are revealed (see Theorem 1.1 of \cite{nicolaidesrowleyBruhat}). We note that \cite{nicolaidesrowleyBruhat} also includes a number of explicit examples of tilings, including some for the iconic Coxeter group $\mathrm{E}_8$.

From now on $W$ denotes a finite Coxeter group with $S$ being the fundamental reflections of $W$. For $u, v \in W$, $u <_R v$ denotes the (right) weak order and $u <_B v$ denotes the Bruhat order on $W$. We now define an Elnitsky embedding, or E-embedding for short, as follows.

\begin{definition}\label{def e-embedding} Suppose that $\varphi : W \hookrightarrow \Sym{n}$ is an embedding (meaning an injective homomorphism). Then $\varphi$ is an E-embedding for $W$ if for all $u,v \in W$
$$ u <_R v \text{ implies } \varphi(u) <_B \varphi(v).$$
\end{definition}

Suppose $\varphi$ is an embedding of $W$ into $\Sym{n}$, and let $w \in W$. Also we choose $\alpha \in (0,\pi/2)$. From this data we construct a $2n$-polygon, where the left-hand $n$ edges all have the same length and pairwise they subtend an angle of $\frac{\pi - 2\alpha}{n-1}$. The remaining $n$ edges of the polygon are determined by the permutation $\varphi(w)$, and we denote this part of the polygon (or half-polygon) by $\mathrm{B}^\alpha_\varphi(w)$. See Definition \ref{Defn Boarder} and also Section \ref{Bijection} where we discuss how we may vary the subtending angles.

In Section \ref{Bijections} we show that given a fixed element $w \in W$,  these embeddings naturally associate each reduced word $w = s_{i_1}\ldots s_{i_\ell}$ to a tiling, denoted $\mathrm{T}_\varphi(s_{i_1}s_{i_2}\ldots s_{i_\ell})$, of some  polygon corresponding to $w$, $\mathrm{P}_{\varphi}(w)$. We denote the set of all such tilings by $\mathcal{T}_{\varphi}(w)$.

Theorem \ref{lemma E-embedding} is the key observation relating E-embeddings to tilings of polygons. In the theorem we use $\prec$ to denote that the half polygon of the one permutation does not cross the half polygon of the other permutation - see Definition \ref{precident}  and Section \ref{Bijections} for more details.

\begin{theorem}\label{lemma E-embedding}
Suppose $\varphi : W \hookrightarrow \Sym{n}$ is an E-embedding and $\alpha \in [\pi/4,\pi/2)$. Then for all $w \in W$ and all reduced expressions $w = s_{i_1}s_{i_2}\ldots s_{i_\ell}$, 
$$\mathrm{B}^\alpha_\varphi(id) \prec \mathrm{B}^\alpha_\varphi(s_{i_1})\prec \mathrm{B}^\alpha_\varphi(s_{i_1}s_{i_2})\prec \ldots \prec \mathrm{B}^\alpha_\varphi(s_{i_1}\ldots s_{i_\ell}). $$
\end{theorem}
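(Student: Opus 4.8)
The plan is to verify the chain one link at a time: the displayed statement only asserts the consecutive relations $\mathrm{B}^\alpha_\varphi(w_{k-1}) \prec \mathrm{B}^\alpha_\varphi(w_k)$, where I abbreviate $w_j = s_{i_1}\cdots s_{i_j}$ and $w_0 = id$. Fix $k$. Since the expression for $w$ is reduced, each prefix is reduced, so $\ell(w_k) = \ell(w_{k-1})+1$ with $w_k = w_{k-1}s_{i_k}$, and therefore $w_{k-1} <_R w_k$ in the right weak order. This is the only point at which reducedness of the word is used.

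Next I would invoke the hypothesis on $\varphi$. By Definition \ref{def e-embedding}, $w_{k-1} <_R w_k$ forces $\varphi(w_{k-1}) <_B \varphi(w_k)$ in the Bruhat order on $\Sym{n}$. The problem thus passes entirely into the symmetric group and reduces to a single statement independent of $W$: for $\sigma, \tau \in \Sym{n}$ with $\sigma <_B \tau$ one has $\mathrm{B}^\alpha_\varphi(\sigma) \prec \mathrm{B}^\alpha_\varphi(\tau)$, i.e. the half-polygon cut out by $\sigma$ does not cross the one cut out by $\tau$.

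To prove this I would first record the shape of the half-polygons. The two boundaries traverse the same fixed set of $n$ edge-vectors in the two orders prescribed by $\sigma$ and $\tau$, so they share both endpoints. The directions occupy an angular band of width $\pi - 2\alpha$ centred on the vertical, hence lie in $[\alpha,\pi-\alpha] \subseteq [\pi/4,3\pi/4]$; this gives two features. First, every edge has a strictly positive vertical component, so each boundary is strictly monotone in height and may be written as a graph $x = x_\sigma(y)$. Second --- and this is exactly where $\alpha \ge \pi/4$ is used --- every edge has horizontal extent no larger than its vertical extent, so each $x_\sigma$ is $1$-Lipschitz. For two height-monotone paths with common endpoints the relation $\prec$ says precisely that $x_\sigma(y)$ and $x_\tau(y)$ are comparable with one fixed sign at every height, and this side-relation is transitive.

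I would then write $\sigma <_B \tau$ as a saturated chain of Bruhat covers $\sigma = \rho_0 \lessdot \rho_1 \lessdot \cdots \lessdot \rho_m = \tau$, each $\rho_j$ obtained from $\rho_{j-1}$ by transposing two positions $a < b$ with a unit increase in length. Across one cover only the block of edges strictly between positions $a$ and $b$ is rearranged, and that subpath is rigidly translated by $d_{\rho_{j-1}(b)} - d_{\rho_{j-1}(a)}$; because the directions are linearly ordered by angle and a length-raising cover always creates an inversion, the horizontal component of this translation has a sign independent of $j$, so every cover nudges the boundary to the same fixed side. The \textbf{main obstacle} is to upgrade this displacement of interior vertices to genuine non-crossing, since the two edges $d_{\rho_{j-1}(b)}$ and $d_{\rho_{j-1}(a)}$ reinserted at the ends of the block must be shown not to poke across the other boundary; here the $1$-Lipschitz bound from $\alpha \ge \pi/4$ is the quantitative input that forbids this, and for $\alpha < \pi/4$ an edge can run more horizontally than vertically and such a crossing can indeed occur. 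Transitivity of the side-relation then composes the covers into $\mathrm{B}^\alpha_\varphi(\sigma) \prec \mathrm{B}^\alpha_\varphi(\tau)$. Should the bookkeeping over non-adjacent transpositions prove delicate, I would bypass the covers and argue directly from the rank (tableau) criterion for Bruhat order, converting the inequalities $\#\{k \le i : \sigma(k) \ge j\} \le \#\{k \le i : \tau(k) \ge j\}$ into the pointwise comparison of $x_\sigma(y)$ and $x_\tau(y)$ that defines $\prec$ in Definition \ref{precident}.
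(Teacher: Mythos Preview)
Your reduction is exactly the paper's: reduced word $\Rightarrow$ weak-order chain in $W$ $\Rightarrow$ Bruhat chain in $\Sym{n}$ via Definition~\ref{def e-embedding} $\Rightarrow$ chain of $\prec$ via the symmetric-group statement. In the paper this last step is packaged as a separate result (Theorem~\ref{symmetric}), so the proof of Theorem~\ref{lemma E-embedding} itself is three lines; what you have written beyond the first two paragraphs is really a sketch of Theorem~\ref{symmetric}.

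On that sketch: your strategy again coincides with the paper's---reduce to a single Bruhat cover $\rho \lessdot \rho t$ with $t=(a,b)$, observe that the two borders differ only on the block $a,\dots,b$, and rule out crossings there using the angular constraint. The paper does the last step by a case split on $|i-j|\in\{0,1,\ge 2\}$ together with an analysis of the ``difference vector'' $\beta_n^{(a)\rho^{-1}}-\beta_n^{(b)\rho^{-1}}$, whose direction lies in $(-\pi/4,\pi/4)$; your $1$-Lipschitz reformulation is the same content expressed more economically and would streamline the $|i-j|=1$ case. Your fallback via the tableau criterion is not in the paper and would need extra care, since the vertices of $\mathrm{B}_n(\sigma)$ and $\mathrm{B}_n(\tau)$ sit at different heights and the rank inequalities compare cardinalities rather than the weighted sums of $x$-components that govern $x_\sigma(y)$; the Lipschitz bound would still be doing the real work at non-vertex heights.
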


We let $J_{\varphi}$ be the set consisting of pairs of elements in $S$ such that for all reduced words in $W$, applying the braid moves for each pair preserves the corresponding tiling, and let $\mathcal{R}_{J_{\varphi}}(w)$ be the equivalence classes of the reduced words for $w$ under these moves. Then we have

\begin{theorem}\label{maintheorem} Suppose that $\varphi$ is a E-embedding of $W$. Then for all $w \in W$ there exists a bijection between $\mathcal{T}_{\varphi}(w)$ and $\mathcal{R}_{J_{\varphi}}(w)$.
\end{theorem}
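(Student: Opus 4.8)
The natural candidate is the map $\Phi$ sending a reduced word $s_{i_1}s_{i_2}\ldots s_{i_\ell}$ for $w$ to its tiling $\mathrm{T}_{\varphi}(s_{i_1}s_{i_2}\ldots s_{i_\ell}) \in \mathcal{T}_{\varphi}(w)$. The first thing I would check is that $\Phi$ descends to the quotient: by the very definition of $J_{\varphi}$, applying any braid move indexed by a pair in $J_{\varphi}$ leaves the associated tiling fixed, so $\Phi$ is constant on each $J_{\varphi}$-equivalence class and induces a well-defined $\overline{\Phi}\colon \mathcal{R}_{J_{\varphi}}(w) \to \mathcal{T}_{\varphi}(w)$. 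Surjectivity of $\overline{\Phi}$ is then immediate, since $\mathcal{T}_{\varphi}(w)$ is defined to be exactly the set of tilings arising from reduced words of $w$. All of the content therefore lies in injectivity.

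For injectivity I must show that if two reduced words $\mathbf{a}$ and $\mathbf{b}$ for $w$ produce the same tiling $T$, then they lie in a common $J_{\varphi}$-class. By Theorem \ref{lemma E-embedding}, each reduced word determines a maximal chain $id \lessdot_R s_{i_1} \lessdot_R s_{i_1}s_{i_2} \lessdot_R \cdots \lessdot_R w$ in right weak order, with the border at each stage nested inside the next, and the tiling is precisely the collection of tiles swept out between consecutive borders $\mathrm{B}^\alpha_\varphi(x)$ and $\mathrm{B}^\alpha_\varphi(xs)$. The plan is to attach to the fixed tiling $T$ a poset $P_T$ on its tiles, where one tile lies below another exactly when the first must be laid before the second in every sweep realising $T$, and to prove the key bijection that reduced words for $w$ with tiling $T$ correspond to linear extensions of $P_T$. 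I would then invoke the standard fact that the linear extensions of a finite poset are connected by adjacent transpositions of incomparable elements, and argue that each such transposition is realised on the level of words by a braid move whose pair lies in $J_{\varphi}$, and conversely. Chaining these transpositions connects $\mathbf{a}$ to $\mathbf{b}$ through $J_{\varphi}$-moves, giving injectivity and hence the bijection.

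The main obstacle is the local step identifying tile transpositions with $J_{\varphi}$-moves. One must show that interchanging two adjacent incomparable tiles is always effected by a braid move indexed by a pair in $J_{\varphi}$, and, in the other direction, that a braid move for a pair $\{s,t\}$ preserves $T$ precisely when the two maximal chains of the dihedral interval $[x, x\cdot\underbrace{sts\cdots}_{m(s,t)}]_R$ sweep out the \emph{same} set of tiles. This requires the precise geometry of how the E-embedding $\varphi$ converts a weak-order covering relation into a tile region, together with the use of $\alpha \in [\pi/4,\pi/2)$ to guarantee that the nesting of borders in Theorem \ref{lemma E-embedding} genuinely decomposes the polygon into well-defined tiles. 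Handling braid relations with $m(s,t) > 2$ uniformly across all finite $W$ — where such higher moves may or may not preserve the tiling depending on $\varphi$ — and verifying that the flip-invariant dihedral configurations are captured exactly by $J_{\varphi}$, is the delicate part of the argument; the remaining combinatorics of linear extensions is then routine.
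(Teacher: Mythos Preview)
Your outline is sound and is in fact the natural way to justify what the paper treats as a one-line corollary. The paper's entire proof of Theorem~\ref{maintheorem} is the sentence ``It is now evident that the necessary bijection for Theorem~\ref{maintheorem} follows'' placed immediately after Corollary~\ref{Lemma Jphi is all or nothing} (which says that non-$J_\varphi$ braid moves always change the tiling, complementing the defining property that $J_\varphi$-moves always preserve it). Strictly speaking that corollary alone does not yield injectivity of $\overline{\Phi}$: a priori two distinct $J_\varphi$-classes could land on the same tiling, since a Matsumoto path between them might leave $T$ via a non-$J_\varphi$ move and later return. Your poset-of-tiles / linear-extension argument is precisely what closes that gap, and it is the classical Elnitsky mechanism the paper is tacitly invoking.

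One simplification resolves the difficulty you flag at the end. By Lemma~\ref{Jdescription}, $\{r,s\}\in J_\varphi$ forces $\mathrm{I}_\varphi(r)\cap \mathrm{I}_\varphi(s)=\emptyset$, so $\varphi(r)$ and $\varphi(s)$ are permutations with disjoint supports and hence commute in $\Sym{n}$; since $\varphi$ is injective, $r$ and $s$ commute in $W$, i.e.\ $m_{r,s}=2$. Thus every $J_\varphi$-move is a commutation, and by Corollary~\ref{Lemma Jphi is all or nothing} every braid move with $m_{r,s}>2$ changes the tiling. Consequently, swapping two adjacent incomparable tiles in your poset $P_T$ is always realised by a single commutation $st\leftrightarrow ts$ with $\{s,t\}\in J_\varphi$ (the swap replaces one intermediate prefix by another, forcing $st=ts$; the tiling is unchanged, so $\{s,t\}\in J_\varphi$ by Corollary~\ref{Lemma Jphi is all or nothing}). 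There are no tiling-preserving higher braid moves to account for, so the ``delicate part'' you anticipate does not arise, and the connectivity of linear extensions under adjacent swaps finishes the argument.
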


If $\varphi$ is an embedding of $W$ in $\Sym{n}$ and $s \in S$, we define the \emph{support interval} of $s$ to be $${\mathrm{I}}_\varphi(s) = \bigcup_{i=1}^k \{a_i, a_i+1,\ldots, b_i-1,b_i\} $$ where $ \varphi(s) = \prod_{i=1}^k(a_i,b_i)$ with $a_i<b_i$.
In Lemma~\ref{Jdescription} we can pinpoint $J_{\varphi}$ in terms of support intervals.

\begin{lemma}\label{Jdescription} Let $\varphi : W \hookrightarrow \Sym{n}$ be a E-embedding. Then $$J_{\varphi} = \{\{r,s\}\subseteq S \, | \, \mathrm{I}_\varphi(s) \cap \mathrm{I}_\varphi(r) = \emptyset\}.$$
\end{lemma}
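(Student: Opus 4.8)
The plan is to prove the two inclusions separately, and the crucial preliminary observation is that disjointness of support intervals is strictly stronger than $\varphi(r)$ and $\varphi(s)$ having disjoint supports. Indeed, if $\mathrm{I}_\varphi(r)\cap\mathrm{I}_\varphi(s)=\emptyset$ then in particular the supports of the involutions $\varphi(r)$ and $\varphi(s)$ are disjoint, so these two permutations commute in $\Sym{n}$, and since $\varphi$ is injective we obtain $rs=sr$, i.e.\ $m(r,s)=2$. Hence every pair with disjoint support intervals is a commuting pair, and the braid move for such a pair is simply the commutation $\cdots rs\cdots\leftrightarrow\cdots sr\cdots$. Conversely, whenever $m(r,s)>2$ the intervals must already meet, so the two regimes $m(r,s)=2$ and $m(r,s)>2$ partition the analysis cleanly.

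For the inclusion $\supseteq$, suppose $\mathrm{I}_\varphi(r)\cap\mathrm{I}_\varphi(s)=\emptyset$. Writing $\varphi(s)=\prod_i(a_i,b_i)$, each transposition $(a_i,b_i)$ swaps the two edges currently occupying positions $a_i$ and $b_i$ and contributes a tile confined to the band of positions $[a_i,b_i]$; hence every tile produced by applying $s$ lies inside the band $\mathrm{I}_\varphi(s)$, and crucially the positions swapped depend only on $\varphi(s)$ and not on the prefix to which $s$ is applied. The same holds for $r$ with band $\mathrm{I}_\varphi(r)$. As the two bands are disjoint, the tiles contributed by $r$ and by $s$ never share a position, so laying them down in the order $\cdots rs\cdots$ or $\cdots sr\cdots$ yields the identical tiling. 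Thus the commutation move preserves every tiling and $\{r,s\}\in J_\varphi$.

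For the reverse inclusion I would prove the contrapositive: assuming $\mathrm{I}_\varphi(r)\cap\mathrm{I}_\varphi(s)\ne\emptyset$, it suffices to exhibit a single reduced word on which the braid move alters the tiling, since $\{r,s\}\notin J_\varphi$ requires only one such witness. The natural candidate is the longest element $w_{rs}$ of the dihedral parabolic $\langle r,s\rangle$, whose two reduced expressions are exactly $\underbrace{rsr\cdots}_{m(r,s)}$ and $\underbrace{srs\cdots}_{m(r,s)}$, interchanged by the braid move. If $m(r,s)>2$, then as in the classical hexagon picture the two expressions place the overlapping band of tiles in genuinely different configurations, so $\mathrm{T}_\varphi$ separates them. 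If $m(r,s)=2$ but the intervals meet, then some transposition $(a,b)$ of $\varphi(r)$ and some $(c,d)$ of $\varphi(s)$ satisfy $[a,b]\cap[c,d]\ne\emptyset$ while $\{a,b\}\cap\{c,d\}=\emptyset$; the two intervals are therefore either crossing or nested, and in each subcase the tile for the inner (respectively left) pair must sit on one definite side of the tile for the outer (respectively right) pair, so $rs$ and $sr$ produce distinct tilings.

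The main obstacle will be this reverse inclusion, specifically turning the phrase ``the tiles sit in different configurations'' into a rigorous statement about $\mathrm{T}_\varphi$. Concretely, I expect to need a description, for a fixed reduced word, of precisely which pair of edges of each intermediate half-polygon bounds the tile added at that step, and then to verify — using the non-crossing relation $\prec$ underlying Theorem~\ref{lemma E-embedding} — that transposing a crossing or a nested pair genuinely changes this incidence data, so that the two tilings are distinct. Establishing that any overlap of support intervals necessarily forces such a crossing-or-nesting pair of transpositions is the combinatorial heart of the argument, whereas the disjoint-interval direction should follow cleanly from the observation that each generator acts on a fixed band of edge-positions.
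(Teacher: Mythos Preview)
Your inclusion $\supseteq$ is essentially the paper's argument: disjoint support intervals force $m(r,s)=2$, and since the border of $ws$ differs from that of $w$ only at heights governed by $\mathrm{I}_\varphi(s)$ (this is Lemma~\ref{col reps}), the commutation move leaves every tiling unchanged.

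For the reverse inclusion your plan contains a genuine error in the $m(r,s)=2$ subcase. You assert that if $\mathrm{I}_\varphi(r)\cap\mathrm{I}_\varphi(s)\ne\emptyset$ while $r,s$ commute, then some transposition $(a,b)$ of $\varphi(r)$ and some $(c,d)$ of $\varphi(s)$ satisfy $[a,b]\cap[c,d]\ne\emptyset$ but $\{a,b\}\cap\{c,d\}=\emptyset$. This is false: take $W$ of type $\mathrm{A}_1\times\mathrm{A}_1$ with $\varphi(r)=(1,2)(3,4)$ and $\varphi(s)=(1,3)(2,4)$ in $\Sym{4}$. One checks directly that this is an E-embedding and that both support intervals equal $\{1,2,3,4\}$, yet every pair of transpositions (one from each factorisation) shares an endpoint, so your crossing/nesting dichotomy is vacuous. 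The conclusion $\{r,s\}\notin J_\varphi$ is still correct here, but not by your route; the $m(r,s)>2$ case is likewise left at the level of a picture.

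The paper sidesteps all of this by never decomposing into individual transpositions. It uses Lemma~\ref{col reps} together with the horizontal-slice function $\mathrm{H}(\mathrm{B}_\varphi(\cdot),y)$: for an arbitrary prefix $v$ one locates a height $y$ in the overlapping band at which (after relabelling) $\mathrm{H}(\mathrm{B}_\varphi(v),y)<\mathrm{H}(\mathrm{B}_\varphi(vs),y)<\mathrm{H}(\mathrm{B}_\varphi(vr),y)$, and then argues via Theorem~\ref{lemma E-embedding} that no prefix of the word beginning $vr\ldots$ can have its border pass through that intermediate $x$-value, so the two tilings disagree. This is precisely the rigorous substitute for ``the tiles sit in different configurations'' that you flag as the main obstacle. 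Note also that the paper treats an \emph{arbitrary} reduced word containing the braid subword rather than the single witness $w_{rs}$; once Lemma~\ref{col reps} is in hand this costs nothing extra and yields the stronger Corollary~\ref{Lemma Jphi is all or nothing}.
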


 The weak order has a succinct interpretation in the context of Elnitsky's tiling for $W = \Sym{n}$. Namely  for all $w\in W$, $s \in S$ we have $w <_R ws$ if and only if the polygon $\mathrm{P}(ws)$ is obtained by appending a rhombus to $\mathrm{P}(w)$ (see Theorem \ref{Elnitsky A}).  What about the Bruhat order? Is this reflected visually somehow in these tilings also? Here we take inspiration from Elnitsky's observation that having an angle of each edge being greater than $\pi/3$ from the horizontal avoids certain self-intersections of tiles. We will show that so long as the angles are always at least $\pi/4$ from the horizontal, self-intersections do not occur for comparable elements in the Bruhat order and hence produce bona fide tiles.  We prove this in Theorem~\ref{symmetric} which is also an important ingredient in the proof of Theorem \ref{lemma E-embedding}.  
 
\begin{theorem}\label{symmetric} For all $u, v \in W = \Sym{n}$,  $u <_B  v$ implies $\mathrm{B}^{\pi/4}(u) \prec  \mathrm{B}^{\pi/4}(v)$.
\end{theorem}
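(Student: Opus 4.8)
The plan is to realise each half-polygon as the graph of a function of the vertical coordinate and to reduce the statement to a single slope estimate. Write the fixed unit directions of the construction as $d_1,\dots,d_n$, where $d_k$ makes angle $\theta_k$ with the horizontal; in the definition of $\mathrm{B}^{\pi/4}$ these angles lie in $[\pi/4,\,3\pi/4]$, and we order them so that the horizontal components of the $d_k$ are monotone in $k$. The boundary $\mathrm{B}^{\pi/4}(w)$ is then the path from the common bottom vertex whose successive edges, read from the bottom, are $d_{w(1)},\dots,d_{w(n)}$, ending at the common top vertex $T=\sum_k d_k$, independent of $w$. First I would record the elementary but crucial point that, since every $\theta_k\in[\pi/4,3\pi/4]$, each edge has strictly positive vertical component; hence $\mathrm{B}^{\pi/4}(w)$ is strictly monotone in the vertical coordinate $y$ and so is the graph of a function $f_w\colon[0,Y_T]\to\mathbb{R}$, where $f_w(y)$ is the horizontal coordinate of the unique point of the path at height $y$. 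For two such graphs sharing both endpoints, ``$\mathrm{B}^{\pi/4}(u)$ does not cross $\mathrm{B}^{\pi/4}(v)$'' is exactly the statement that $f_u-f_v$ has constant sign, and the relation $\prec$ records that the former lies weakly to one side, i.e.\ (up to orientation) $f_u\le f_v$. Being a pointwise inequality, $\prec$ is transitive.

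Because $\prec$ is transitive and the Bruhat order is the transitive closure of its covers, it suffices to treat a single cover $u\lessdot v$, say $v=u\,t_{ab}$ with $a<b$ and $u(a)<u(b)$. Here I would compute the effect of the move on the two paths directly. Since the one-line notations of $u$ and $v$ agree outside positions $a$ and $b$, the paths coincide on edges $1,\dots,a-1$ and again on edges $b+1,\dots,n$; in particular they share the vertices $P_{a-1}$ and $P_b$ bounding a ``window''. A short calculation shows that at every interior vertex $P_i$ with $a\le i\le b-1$ the two paths differ by the single fixed vector $\delta:=d_{u(b)}-d_{u(a)}$, that is $P_i^{\,v}=P_i^{\,u}+\delta$, while the window endpoints are pinned together. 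As $u(a)<u(b)$ the horizontal component $\delta_x$ is positive, so at the vertices $v$ lies strictly to the right of $u$; the whole problem is to control the behaviour \emph{between} vertices, where the two families of vertices sit at heights differing by $\delta_y$.

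This interior control is the heart of the argument and the only place the angle enters. Since $f_u$ and $f_v$ are piecewise linear with breakpoints exactly at the vertex heights of the two paths and vanish at both ends of the window, the difference $f_v-f_u$ is nonnegative everywhere as soon as it is nonnegative at each breakpoint. At a breakpoint one evaluates $f_v-f_u$ against the vertex relation $P_i^{\,v}=P_i^{\,u}+\delta$ and absorbs the height mismatch $\delta_y$ using the uniform Lipschitz bound that every edge changes $x$ by at most $\cot(\pi/4)=1$ per unit change in $y$ (because $|\cot\theta_k|\le\cot(\pi/4)$ for $\theta_k\in[\pi/4,3\pi/4]$). This yields, at every breakpoint,
$$ f_v-f_u \;\ge\; \delta_x-|\delta_y|\,\cot(\pi/4)\;=\;\delta_x-|\delta_y|. $$
Hence it remains to verify the slope estimate $\delta_x\ge|\delta_y|$, i.e.\ that $\delta=d_{u(b)}-d_{u(a)}$ makes an angle at most $\pi/4$ with the horizontal. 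Writing $\phi_1=\theta_{u(b)}$ and $\phi_2=\theta_{u(a)}$, the difference of two unit vectors points in the direction $\tfrac{\phi_1+\phi_2}{2}-\tfrac{\pi}{2}$, whose angle with the horizontal is $\bigl|\tfrac{\phi_1+\phi_2}{2}-\tfrac{\pi}{2}\bigr|$; since $\phi_1,\phi_2\in[\pi/4,3\pi/4]$ their mean also lies in $[\pi/4,3\pi/4]$, so this angle is at most $\pi/4$, as required.

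This is precisely the borderline case, and it is what forces the threshold to be $\pi/4$: for a smaller cutoff $\alpha$ the admissible angles fill $[\alpha,\pi-\alpha]$, the vector $\delta$ can make an angle as large as $\pi/2-\alpha>\alpha$ with the horizontal, the estimate $\delta_x\ge|\delta_y|\cot\alpha$ fails, and one can exhibit an actual crossing. I expect this final slope computation, together with the bookkeeping that the uniform Lipschitz bound indeed governs every breakpoint no matter how many edges the height gap $\delta_y$ spans, to be the main technical point; the monotonicity and the reduction to covers are routine.
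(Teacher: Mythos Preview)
Your proposal is correct and rests on the same pivotal computation as the paper: the difference vector $\delta=d_{u(b)}-d_{u(a)}$ of two unit vectors with angles in $[\pi/4,3\pi/4]$ has direction making angle at most $\pi/4$ with the horizontal, so $|\delta_y|\le\delta_x$. Where you diverge is in how this estimate is deployed. The paper argues geometrically edge by edge, splitting into the three cases $|i-j|=0$, $|i-j|=1$, $|i-j|\ge 2$ and treating each with a separate picture (parallelism, a rhombic-region containment, and a height comparison $h_d\le 1-\tfrac{\sqrt2}{2}<\tfrac{\sqrt2}{2}\le h_{i+1}$, respectively). You instead parameterise both borders as graphs $f_u,f_v$ of the height variable, observe that $f_v-f_u$ is piecewise linear and vanishes at the window endpoints, and reduce to checking nonnegativity at the interior breakpoints via the uniform Lipschitz bound $|f'|\le\cot(\pi/4)=1$ combined with $\delta_x\ge|\delta_y|$. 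This collapses the paper's case split into a single inequality and makes the sharpness of $\pi/4$ immediate; the paper's case analysis, on the other hand, isolates the distinct geometric reasons the cases cannot fail and yields the finer remark that the $|i-j|\ge2$ case already holds at $\alpha=\pi/6$. One small point worth making explicit when you write it out: verify that every interior vertex height $y_i^v=y_i^u+\delta_y$ actually stays inside the window $[y_{a-1},y_b]$ (it does, by a one-line check at $i=a$ and $i=b-1$), so that the Lipschitz comparison is always taken over a segment contained in the window.
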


 Interestingly, we observe that the converse is not true, that is, non-comparable elements in the Bruhat order can yield tiles in the same sense. We highlight the apparent rarity of this phenomenon as, for example in $\Sym{4}$, amongst the 87 pairs of non-comparable elements there is only one pair (up to inverses)  exhibiting this behaviour. See the beginning of Section \ref{Bijection}.

Section~\ref{background} mostly looks at the tile constructions for type $\mathrm{D}$ Coxeter groups which appear in \cite{elnitsky}, giving some explicit examples to set the scene for the E-polygons which are introduced in Section~\ref{section E-polygons}. There, as a prelude to Theorem~\ref{symmetric}, exhaustive examples are given for $\Sym{3}$. The main task in Section~\ref{Bijection} is the proof of Theorem~\ref{symmetric}.

\section{Background}\label{background}

In this section, we first give a quick review of some material on Coxeter groups which we need. Then to place this paper in context we briefly describe some of Elnitsky's tiling constructions. A finite Coxeter group $W$ with $S$ its set of fundamental reflections means that
$$W= \langle S \; | \; (rs)^{m_{r,s}} = 1 \;r, s \in S \rangle,$$
where $m_{r,r} = 1$ and $m_{r,s}  > 1$ for all $r\not = s$. We use $\ell(w)$ to denote the length of $w$ in $W$.

Let $T$ denote the set of reflections of $W$, and let $w_1, w_2 \in W$. The weak (right) order on $W$, $<_R$, is the transitive extension of $w_1 <_R w_2$ if $w_2 = w_1s$ for some $s \in S$ and $\ell(w_2) = \ell(w_1)+1$. While the Bruhat order on $W$ is the transitive extension of $w_1 <_B w_2$ if $w_2 = w_1t$ for some $t \in T$ and $\ell(w_2) = \ell(w_1)+1$. The Bruhat order has been studied extensively, but here we only have need of a characterization of this order in the symmetric group.

\begin{theorem}\label{BruhatSym} Let $w \in W = \Sym{n}$, and let $t = (a,b) \in T$ with $a < b$. Then $w <_B wt$ if and only if $aw^{-1} < bw^{-1}$.
\end{theorem}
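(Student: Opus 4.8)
The plan is to convert the order relation $w <_B wt$ into a statement about the length function and then compute how the length changes by counting inversions. Throughout I would use the standard description of length in $W = \Sym{n}$, namely $\ell(w) = \mathrm{inv}(w) = \#\{(i,j) : i<j,\ iw > jw\}$, together with the convention that permutations act on the right, so $i(wt) = (iw)t$. Since $t = (a,b)$ exchanges the values $a$ and $b$, right multiplication by $t$ swaps the two entries $a$ and $b$ in the one-line notation of $w$, and these entries occupy the positions $p := aw^{-1}$ and $q := bw^{-1}$. The whole theorem then hinges on the sign of $\ell(wt) - \ell(w)$.

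The heart of the argument is the inversion count. Let $m$ denote the number of values lying strictly between $a$ and $b$ whose positions lie strictly between $p$ and $q$. I would observe that the only position pairs whose inversion status can change under the swap are the pair $\{p,q\}$ itself and the pairs involving one of the intermediate positions $r$; a short case analysis according to whether $rw<a$, $a<rw<b$, or $rw>b$ shows that the contributions of positions outside the interval cancel in pairs, each qualifying intermediate value contributes $\pm 2$, and the pair $\{p,q\}$ contributes $\pm 1$, the sign being positive exactly when $p<q$. This yields $\ell(wt)-\ell(w) = 2m+1$ when $aw^{-1} < bw^{-1}$ and $\ell(wt)-\ell(w) = -(2m+1)$ otherwise; in particular $\ell(wt) > \ell(w) \iff aw^{-1} < bw^{-1}$ (equality is impossible since $a \neq b$).

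The forward implication is then immediate from the definition of $<_B$. If $w <_B wt$, there is a saturated chain $w = x_0 \lessdot x_1 \lessdot \cdots \lessdot x_k = wt$ in which each cover increases length by one, so $\ell(wt) > \ell(w)$, and the inversion count above forces $aw^{-1} < bw^{-1}$.

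It remains to prove the backward implication, and this is where I expect the main obstacle. When $m=0$ the count gives $\ell(wt) = \ell(w)+1$, so $wt$ covers $w$ and $w <_B wt$ directly. When $m \geq 1$ the gap $2m+1$ exceeds one, and one must produce a saturated chain from $w$ to $wt$; this is precisely the step requiring the general theory. The cleanest route is to invoke the standard fact that the Bruhat order of a Coxeter group is graded by $\ell$, so that for a reflection $t$ exactly one of $w <_B wt$ and $wt <_B w$ holds according to the sign of $\ell(wt)-\ell(w)$ (a standard fact; see, e.g., Bj\"orner and Brenti); combined with the count this finishes the proof. If instead one wants a self-contained argument inside $\Sym{n}$, the chain must be built by hand, and the difficulty is genuine: a naive attempt to peel off a single cover $w \lessdot w(a,c)$ with $a<c<b$ fails because $w(a,c)$ and $wt$ differ by the $3$-cycle $(a,c,b)$ rather than by a single transposition, so the induction has to be organized carefully around a well-chosen intermediate value.
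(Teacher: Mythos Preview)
Your argument is correct. The paper's own proof of this statement is a one-line citation to Lemma~2.1.4 of Bj\"orner--Brenti, so your proposal is considerably more explicit: you actually carry out the inversion count and obtain the precise formula $\ell(wt)-\ell(w)=\pm(2m+1)$, with $m$ the number of values strictly between $a$ and $b$ occupying positions strictly between $aw^{-1}$ and $bw^{-1}$. This buys something the bare citation does not: the $m=0$ case is exactly the description of Bruhat \emph{covers} (``no intermediate $k$ with $a<k<b$ and $aw^{-1}<kw^{-1}<bw^{-1}$''), which the paper in fact uses later in the proof of Theorem~\ref{symmetric} without separately justifying it. On the other hand, your backward implication for $m\ge 1$ still appeals to the general Coxeter-theoretic fact that $\ell(wt)>\ell(w)$ forces $w<_B wt$ (again from Bj\"orner--Brenti), so the dependence on the reference is not eliminated, only relocated. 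Your closing remark about the difficulty of building the chain by hand is accurate; the standard hand-built induction picks an intermediate value $c$ with $a<c<b$ and $p<cw^{-1}<q$, factors $(a,b)=(a,c)(c,b)(a,c)$, and recurses on shorter intervals, but getting the bookkeeping right is indeed delicate and the citation is the cleaner route.
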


\begin{proof} See Lemma 2.1.4 of \cite{BB} (note that we write permutations on the right here).
\end{proof}

When $W$ is the Coxeter group of type $\mathrm{A}_{n-1}$, for each $w \in W$ Elnitsky constructs a tiling of a $2n$-gon. We say more on this case after jumping straight into the case of type $\mathrm{D}_n$, as this illustrates the ``overlapping tiles" issue.

Let $(W,S)$ be $(\mathrm{D}_n,\{s_1,s_2,\ldots,s_n\})$, a standard embedding of $\mathrm{D}_n$ into $\Sym{2n}$ with $s_1=(1,-2)(2,-1)$ and $s_i = (i-1,i)(-(i-1),-i)$ for each $i \in \{2, \dots, n \}$. In this setting, we construct polygons $\mathrm{P}(w)$ for all $w \in W$as follows.
\begin{enumerate}[$(i)$]
    \item Let $\mathrm{U}$ be the upper-most vertex of our $4n$-gon, and $\mathrm{L}$ the lower-most vertex and $M$ the vertex that is an equal distance from both.
    \item Let the first $2n$ edges anticlockwise from $\mathrm{U}$ be those of the regular $4n$-gon with unit length edges, labelling them with $\{n,n-1,\ldots,1,-1,\ldots,-n\}$ respectively.
    \item  For each $i \in \{-n,-n+1,\ldots,-1,1,\ldots,n\}$, construct and label the $2n$ edges anti-clockwise from $\mathrm{L}$  (in order $1, 2, \dots, n, -n, -n-1, \dots, -1$) such that the $i^{th}$ edge from $M$ in this direction has the same length of that $(i)w^{-1}$ and parallel to it also. 
\end{enumerate}
As observed by Elnitsky, in order to avoid intersections of tiles the absolute gradient of each of our edges from the horizontal must  be strictly greater than $\pi/3$. We present an example of such a polygon for $\mathrm{D}_4$.


\begin{figure}[H]
    \centering
    \includegraphics[width = 6cm ]{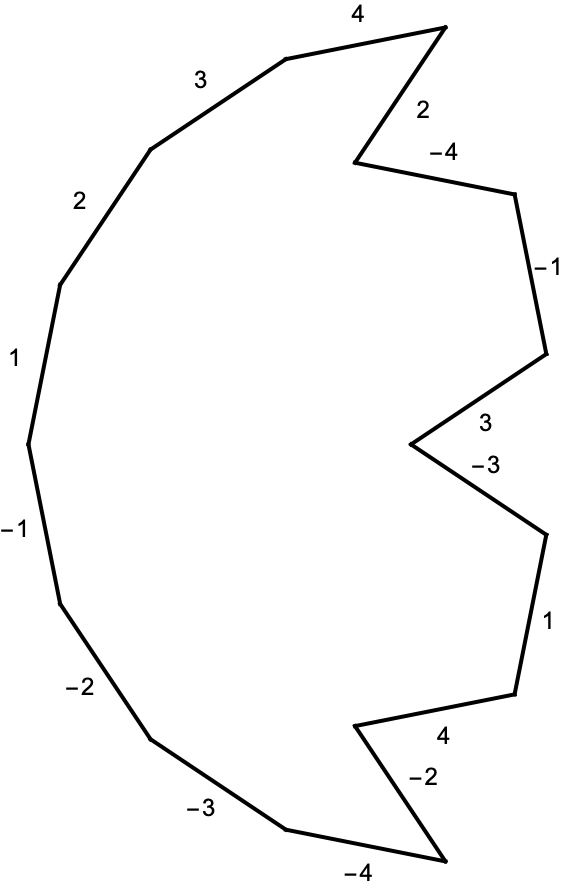}
    \caption{$\mathrm{P}(w)$ for $w = (\overset{+}{1}, \overset{-}{3}, \overset{+}{4}, \overset{-}{2})  \in \mathrm{D}_4$ .}
    \label{border}
\end{figure}

 The tiles used for type $\mathrm{D}$ are more complex than those for type $\mathrm{A}$. In particular, we have a set of \emph{megatiles} at our disposal. The megatiles are a subset of octagons with unit edge lengths which can be constructed as follows. Its upper-most vertex, $\mathrm{U}_0$, and lower-most vertex, $\mathrm{L}_0$, must lie on a vertical line. Its first four edges anti-clockwise from $\mathrm{U}_0$ must be symmetric through the horizontal line passing through the middle vertex. Call these edges $E_0$. Then to make the remaining edges perform the following on $E_0$:
\begin{enumerate}[$(i)$]
    \item Reflect $E_0$ through the vertical line passing through $\mathrm{U}_0$ and $\mathrm{L}_0$. 
    \item Transpose the first and second pair of edges and the third and fourth pair of edges in $E_0$ respectively.

\end{enumerate}

We present two different tilings of the polygon for $w = (\overset{+}{1}, \overset{-}{3}, \overset{+}{4}, \overset{-}{2}) \in \mathrm{D}_4$.
\begin{figure}[H]
     \centering
     \begin{subfigure}[b]{0.4\textwidth}
         \centering
         \includegraphics[width=\textwidth]{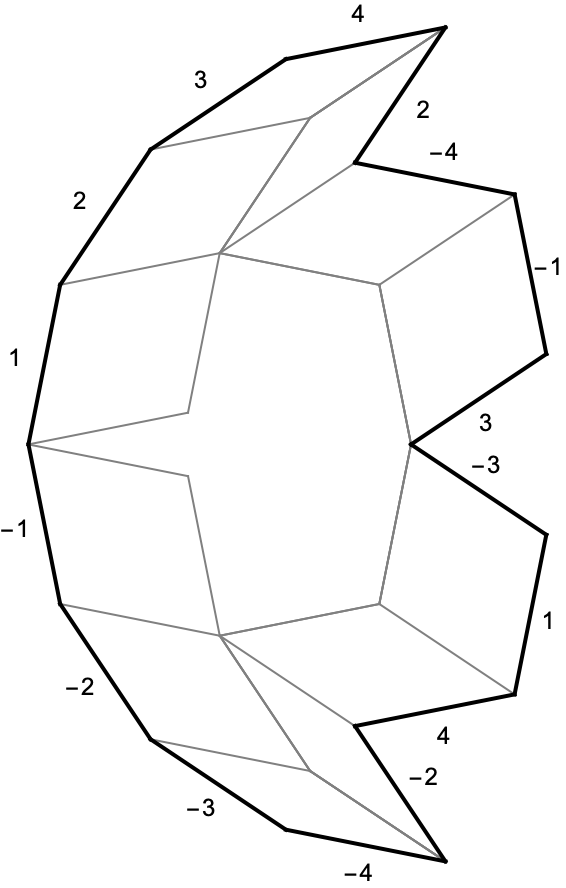}
         \caption{$w= s_1 s_2 s_3 s_4 s_1 s_2 s_3 $}
      
     \end{subfigure}
     \hspace{2cm}
     \begin{subfigure}[b]{0.4\textwidth}
         \centering
         \includegraphics[width=\textwidth]{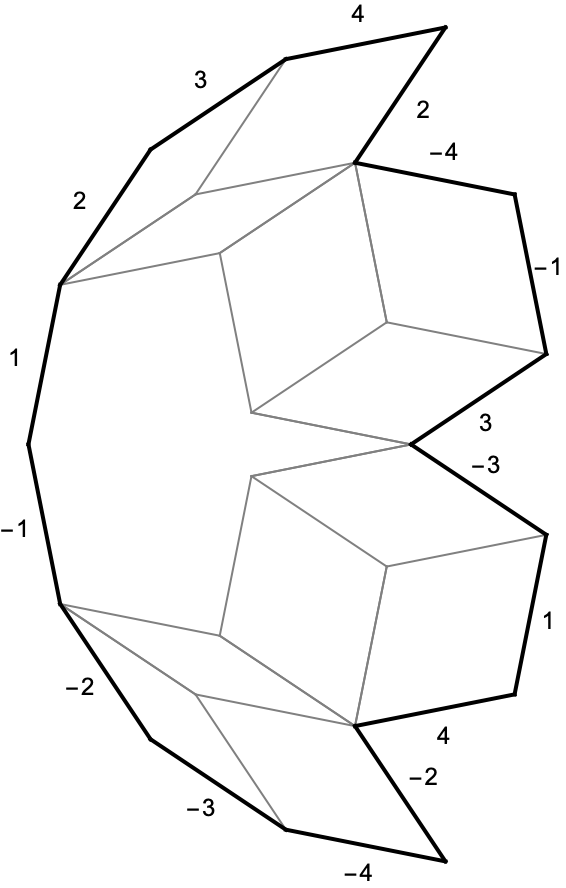}
         \caption{$w= s_2 s_1 s_2 s_4 s_2 s_3 s_2 $}
         
     \end{subfigure}
     \caption{Two Elnitsky Tilings for $w = (\overset{+}{1}, \overset{-}{3}, \overset{+}{4}, \overset{-}{2}) \in \mathrm{D}_4$ }
     \label{DtilingsFigure}
\end{figure}


Figure \ref{DtilingsFigure} is part of a general phenomenon captured by Theorem 7.1 of Elnitsky \cite{elnitsky}. This result essentially gives a bijection between reduced words of $\mathrm{D}_n$ and tilings by rhombi and megatiles (up to commutation classes).

Elnitsky notices that these tilings may have self-intersections. Luckily, as noted earlier, he also provides a remedy for these intersections: if the angles from the horizontal of each edge in the border are at least $\pi/3$ then these self-intersections are removed. We present an example in Figure \ref{Figure Dn self-intersect} for the reduced word  $s_3s_4s_5s_2s_3s_4s_5s_1s_2s_3s_1$ in $\mathrm{D}_5$.

\begin{figure}[H]
     \centering
     \begin{subfigure}[b]{0.4\textwidth}
         \centering
         \includegraphics[height=8cm]{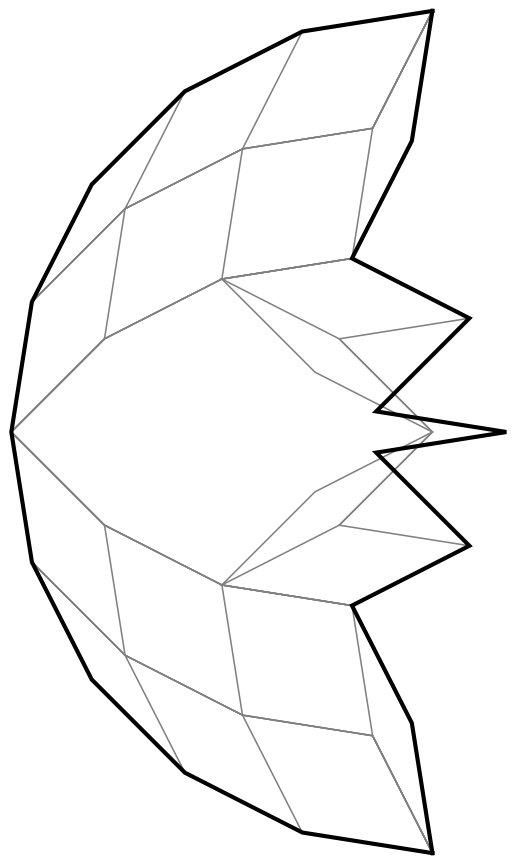}
     \end{subfigure}
     \hspace{2cm}
     \begin{subfigure}[b]{0.4\textwidth}
         \centering
         \includegraphics[height=8cm]{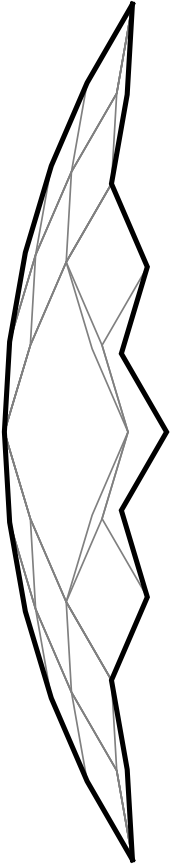}
     \end{subfigure}
     \caption{Elnitsky's Type $\mathrm{D}$ tilings for the reduced word $s_3s_4s_5s_2s_3s_4s_5s_1s_2s_3s_1$ using the usual round representation (left) and ensuring that the angles are each $\pi/3$ from the horizontal (right).}
      \label{Figure Dn self-intersect}
\end{figure}


For $W = \Sym{n}$ and $w \in W$, $\mathrm{P}(w)$ is a $2n$-gon with $n$ edges labelled 1, \dots, n clockwise from $\mathrm{L}$, the lowest-most vertex. Analogous to the $\mathrm{D}_n$ case, starting from $\mathrm{L}$, going anticlockwise the $i^{th}$ edge from $\mathrm{L}$ is parallel to, and labelled as,  $(i)w^{-1}$. Elnitsky also proved the following result, analogous to the result for type $\mathrm{D}$ where it is only necessary to tile with rhombi. Here we write $\mathrm{T}(w)$ and $\mathcal{R}_{\mathrm{J}}(w)$ for $\mathrm{T}_\varphi(w)$ and $\mathcal{R}_{\mathrm{J}_\varphi}(w)$.

 \begin{theorem}\label{Elnitsky A}
 Suppose $W = \Sym{n}$ and
  let   $\mathrm{J} = \{\{s_i,s_j\} \;| \; |i-j|\ge 2 \}$.
  Then for  all $w \in W$ there exists a bijection between $\mathrm{T}(w)$ and $\mathcal{R}_{\mathrm{J}}(w)$.
\end{theorem}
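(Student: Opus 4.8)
The plan is to recognize Theorem~\ref{Elnitsky A} as the specialization of Theorem~\ref{maintheorem} to the natural (identity) embedding $\varphi = \mathrm{id} : \Sym{n} \hookrightarrow \Sym{n}$, under which the E-polygon construction of Section~\ref{section E-polygons} reduces to Elnitsky's classical $2n$-gon $\mathrm{P}(w)$. The argument then splits into three steps: verify that the identity is an E-embedding, compute $J_\varphi$ via Lemma~\ref{Jdescription} and confirm it equals $\mathrm{J}$, and finally invoke Theorem~\ref{maintheorem}.

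First I would check that $\mathrm{id}$ is an E-embedding in the sense of Definition~\ref{def e-embedding}. This amounts to the standard fact that weak order is contained in Bruhat order: if $u <_R v$, then $v = us$ with $s \in S$ and $\ell(v) = \ell(u)+1$, and since every fundamental reflection lies in the set $T$ of all reflections, this is exactly a covering relation for $<_B$, so $u <_B v$. Taking the transitive extension shows $u <_R v$ implies $\mathrm{id}(u) <_B \mathrm{id}(v)$, as required. Next I would compute the support intervals for this embedding. Since $\varphi(s_i) = s_i = (i,i+1)$, Definition of the support interval gives $\mathrm{I}_\varphi(s_i) = \{i, i+1\}$, and the two intervals $\{i,i+1\}$ and $\{j,j+1\}$ are disjoint precisely when $|i-j| \ge 2$. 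Lemma~\ref{Jdescription} therefore yields $J_\varphi = \{\{s_i, s_j\} \mid |i-j| \ge 2\} = \mathrm{J}$, so the braid moves indexed by $J_\varphi$ are exactly the commutation relations $s_i s_j = s_j s_i$ for non-adjacent generators, and $\mathcal{R}_{J_\varphi}(w) = \mathcal{R}_{\mathrm{J}}(w)$ is the set of commutation classes of reduced words.

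With $\varphi$ an E-embedding and $J_\varphi = \mathrm{J}$ established, Theorem~\ref{maintheorem} immediately supplies a bijection between $\mathrm{T}_\varphi(w) = \mathrm{T}(w)$ and $\mathcal{R}_{J_\varphi}(w) = \mathcal{R}_{\mathrm{J}}(w)$, which is precisely the assertion. I expect no serious obstacle here, since all the substantive content has already been absorbed into Theorem~\ref{maintheorem} (and, upstream of it, Theorems~\ref{lemma E-embedding} and~\ref{symmetric}). The only point demanding care rather than genuine difficulty is confirming that the E-polygon $\mathrm{P}_\varphi(w)$ for the identity embedding coincides with the $2n$-gon $\mathrm{P}(w)$ described for $W = \Sym{n}$, and that each application of a generator $s_i$ appends a rhombus so that the tiles of $\mathcal{T}_\varphi(w)$ are exactly Elnitsky's rhombi; this is a matter of unwinding the border construction (edges parallel to and labelled by $(i)w^{-1}$) against Definition~\ref{Defn Boarder}, and follows the weak-order interpretation already recorded via Theorem~\ref{Elnitsky A} that $w <_R ws$ corresponds to adjoining a single rhombus.
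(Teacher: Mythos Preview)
The paper does not prove Theorem~\ref{Elnitsky A}; it is stated in the background section as Elnitsky's classical result, attributed to \cite{elnitsky}, and serves as the motivating special case that the paper's machinery is designed to generalize. So there is no ``paper's own proof'' to compare against in the strict sense.

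Your proposal is nonetheless correct and is exactly how one would recover Theorem~\ref{Elnitsky A} from the paper's framework a posteriori: take $\varphi = \mathrm{id}$, observe it is an E-embedding because weak order refines Bruhat order, compute $\mathrm{I}_\varphi(s_i)=\{i,i+1\}$ so that Lemma~\ref{Jdescription} gives $J_\varphi = \mathrm{J}$, and invoke Theorem~\ref{maintheorem}. There is no circularity, since the proofs of Theorem~\ref{maintheorem}, Lemma~\ref{Jdescription}, and their supporting results (Theorems~\ref{lemma E-embedding} and~\ref{symmetric}, Corollary~\ref{sym col}, Lemma~\ref{col reps}) nowhere appeal to Theorem~\ref{Elnitsky A}. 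What your route buys is a self-contained derivation inside the paper's setup; what the paper's treatment buys is historical clarity---it presents Elnitsky's theorem as the established result that the E-embedding formalism is meant to extend, rather than as a corollary of that extension. One small caution in your last paragraph: you cite Theorem~\ref{Elnitsky A} itself for the rhombus-appending interpretation of $w <_R ws$, which would be circular if you are trying to prove Theorem~\ref{Elnitsky A}; but this is inessential to your argument, since the identification of $\mathrm{P}_\varphi(w)$ with Elnitsky's $\mathrm{P}(w)$ follows directly from Definition~\ref{Defn Boarder} without needing that interpretation.
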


\section{$\mathrm{E}$-polygons}\label{section E-polygons}
Let $(W,S) = (\Sym{n},\{s_1,\ldots,s_{n-1}\})$ and fix some $\alpha \in (0,\pi/2)$.  

\begin{definition}\label{Defn edge Set}
Let $\beta_n^k(\alpha)$ denote the 2-dimensional, real, unit vector $$\beta_n^k(\alpha) = \begin{pmatrix}
-\cos\left(\frac{(k-1)(\pi-2\alpha)}{n-1}+\alpha\right)\\
\phantom{-}\sin\left(\frac{(k-1)(\pi-2\alpha)}{n-1}+\alpha\right)
\end{pmatrix}$$ for $k = 1,\ldots,n$.  We will sometimes refer to upper and lower entries of the vectors as the $x$ and $y$ coordinates respectively. We call $\mathcal{B}^\alpha_n = \{\beta_n^1(\alpha), \ldots,\beta_n^n(\alpha)\}$ a set of \textit{underlying vectors}.
\end{definition}
When $\alpha$ is clearly fixed from context, we will write $\beta_n^k(\alpha)$ more simply as $\beta_n^k$. Visually, these vectors are distributed evenly on the upper half of the unit circle whose absolute angles from the horizontal axis is at least $\alpha$, see Figure \ref{Figure Betanis}. In practice, the angles do not need to be evenly distributed - we just need the angle of $\beta_n^i$ measured anti-clockwise from $\begin{pmatrix}
1\\
0
\end{pmatrix}$ to be greater than that of $\beta_n^j$ whenever $i>j$.

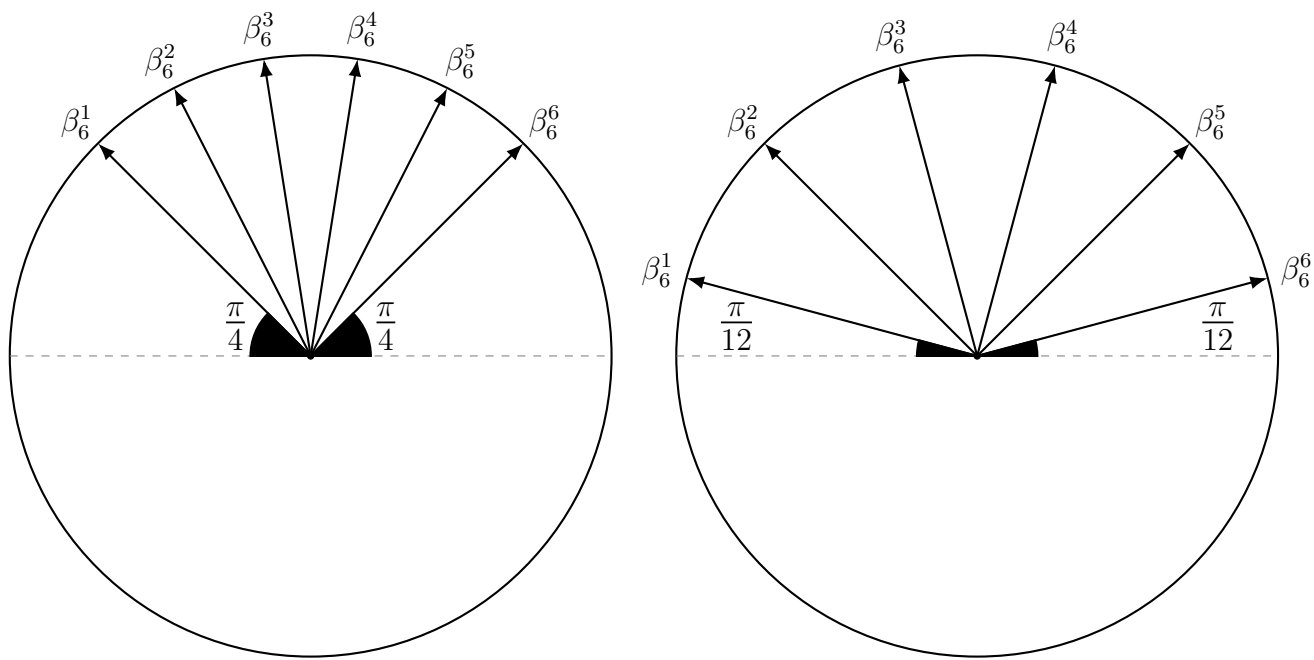
\begin{figure}[H]
    \hspace{-1cm}
\begin{subfigure}[b]{-0.1\textwidth}
     \begin{tikzpicture}[scale=0.8]
    \draw[black,thick] (5,0) arc (360:0:5);  
    \draw[gray,dashed] (-5,0) -- (5,0);  
    \draw[gray,dashed] (0,0) -- (3.5355,3.5355);  
    \draw[gray,dashed] (0,0) -- (-3.5355,3.5355); 
    \draw[scale=5,black,-Latex,thick] (0,0) -> (0.707, 0.707);  
    \node (a) at (5.5*0.707, 5.5*0.707) { $\beta_6^6$};
    \draw[scale=5,black,-Latex,thick] (0,0) -> (0.454, 0.891);  
    \node (a) at (5.5*0.454, 5.5*0.891) { $\beta_6^5$};
    \draw[scale=5,black,-Latex,thick] (0,0) -> (0.156, 0.988);  
    \node (a) at (5.5*0.156, 5.5*0.988) { $\beta_6^4$};
    \draw[scale=5,black,-Latex,thick] (0,0) -> (-0.156, 0.988); 
    \node (a) at (-5.5*0.156, 5.5*0.988) { $\beta_6^3$};
    \draw[scale=5,black,-Latex,thick] (0,0) -> (-0.454, 0.891); 
    \node (a) at (-5.5*0.454, 5.5*0.891) { $\beta_6^2$};
    \draw[scale=5,black,-Latex,thick] (0,0) -> (-0.707, 0.707);
    \node (a) at (-5.5*0.707, 5.5*0.707) { $\beta_6^1$};
    \filldraw (0,0) circle[black,radius=1.5pt];
    \filldraw[black,thick] (0,0)--(1,0) arc (0:45:1)--cycle; 
    \node (a) at (1.25,0.5) {\Large $\frac{\pi}{4}$};
    \filldraw[black,thick] (0,0)--(-1,0) arc (0:-45:-1)--cycle; 
    \node (a) at (-1.25,0.5) {\Large $\frac{\pi}{4}$};
    \end{tikzpicture}   
\end{subfigure}
\hspace{8cm}
\begin{subfigure}[b]{-0.1\textwidth}
    \begin{tikzpicture}[scale=4]
    \draw[black,thick] (1,0) arc (360:0:1);  
    \draw[gray,dashed] (-1,0) -- (1,0);  
    \draw[gray,dashed] (0,0) -- (-0.965926, 0.258819);  
    \draw[gray,dashed] (0,0) -- (0.965926, 0.258819);

    \draw[scale=1,black,-Latex,thick] (0,0) -> (-0.965926, 0.258819); 
    \node (a) at (-0.965926*1.1, 0.258819*1.1) { $\beta_6^1$};
    \draw[scale=1,black,-Latex,thick] (0,0) -> (0.965926, 0.258819); 
    \node (a) at (0.965926*1.1, 0.258819*1.1) { $\beta_6^6$};
    
    \draw[scale=1,black,-Latex,thick] (0,0) -> (-0.707107, 0.707107);
    \node (a) at (-0.707107*1.1, 0.707107*1.1) { $\beta_6^2$};
    \draw[scale=1,black,-Latex,thick] (0,0) -> (0.707107, 0.707107); 
    \node (a) at (0.707107*1.1, 0.707107*1.1) { $\beta_6^5$};
    
    \draw[scale=1,black,-Latex,thick] (0,0) -> (-0.258819, 0.965926);  
     \node (a) at (-0.258819*1.1, 0.965926*1.1) { $\beta_6^3$};
    \draw[scale=1,black,-Latex,thick] (0,0) -> (0.258819, 0.965926); 
    \node (a) at (1.1*0.258819, 1.1*0.965926) { $\beta_6^4$};
    \filldraw (0,0) circle[black,radius=0.3pt];
    \filldraw[black,thick] (0,0)--(0.2,0) arc (0:15:0.2)--cycle; 
    \node (a) at (0.8,0.1) {\Large $\frac{\pi}{12}$};
    \filldraw[black,thick] (0,0)--(-0.2,0) arc (0:-15:-0.2)--cycle; 
    \node (a) at (-0.8,0.1) {\Large $\frac{\pi}{12}$};
    \end{tikzpicture}
\end{subfigure}

    \caption{The set $\mathcal{B}^\alpha_6$ for $\alpha = \pi/4$ and $\pi/12$ respectively.}
    \label{Figure Betanis}
\end{figure}

\begin{definition}\label{Defn w-images}
For all $w \in W$ we define the ordered set $$\mathcal{B}^\alpha_n(w):=\{\beta_n^{(1)w^{-1}},\ldots \beta_n^{(n)w^{-1}}\}$$ to be the $w$-image of  $\mathcal{B}^\alpha_n$. 
\end{definition}

\begin{definition}\label{Defn Boarder}
Given $w \in W$, for $i=1,\ldots,n$, we define $\mathrm{B}_n^{\alpha}(w)_i$ to be the unit length line segment whose end points are $\Sigma_{j=1}^{i-1}\mathcal{B}^\alpha_n(\sigma)_j$ and $\Sigma_{j=1}^{i}\mathcal{B}^\alpha_n(\sigma)_j$.  Here it is understood that $\Sigma_{j=1}^{0}\mathcal{B}^\alpha_n(\sigma)_j$ is the zero-vector. We call $$\mathrm{B}_n^\alpha(w) = \bigcup_{i = 1}^n \mathrm{B}_n^{\alpha}(w)_i$$ the \textit{border} of $w$ and
$\mathrm{B}_n^{\alpha}(w)_i$ it's edge in the $i^{th}$ position.
\end{definition}

The borders $\mathrm{B}_n^\alpha(id)$ and $\mathrm{B}_n^\alpha((1,2)(4,5,6))$ in $\Sym{6}$ are displayed in Figure \ref{Figure borders Examples}:

\begin{figure}[H]
\begin{subfigure}[b]{-0.1\textwidth}
   \begin{tikzpicture}
   \draw[black,thick,scale=0.5] 
   (0,0)--(-2.83,2.83)--(-4.64,6.39)--(-5.27,10.3)--(-4.64,14.3)--(-2.83,17.9)--(0.*10^-2,20.7);
   \node (a) at (-1.5,0.5) { $\beta_6^1$};
   \node (a) at (-2.5,2) { $\beta_6^2$};
   \node (a) at (-3,3.9) { $\beta_6^3$};

   \node (a) at (-3,6+0.5) { $\beta_6^4$};
   \node (a) at (-2.5,6+2.3) { $\beta_6^5$};
   \node (a) at (-1.5,6+3.9) { $\beta_6^6$};
   \end{tikzpicture}    
\end{subfigure}
\hfill
\begin{subfigure}[b]{0.4\textwidth}
   \begin{tikzpicture}
   \draw[black,thick,scale=0.5] 
(0, 0)--(-1.82, 3.56)--(-4.64, 6.39)--(-5.27, 10.3)--(-2.44,13.2)--(-1.82, 17.1)--(0, 20.7);
   \node (a) at (-1.5,0.5) { $\beta_6^2$};
   \node (a) at (-2.5,2) { $\beta_6^1$};
   \node (a) at (-3,4.1) { $\beta_6^3$};
   \node (a) at (-1.6,6+1.5)  { $\beta_6^4$};    
   \node (a) at (-1,6+3.6){ $\beta_6^5$};
   \node (a) at (-2.3,6+0.4) { $\beta_6^6$};
   \end{tikzpicture}    
\end{subfigure}
   \caption{The borders $\mathrm{B}_n^{\alpha}(id)$ and $\mathrm{B}_n^{\alpha}((1,2)(4,5,6))$ in $\Sym{6}$ with $\alpha = \pi/4$.}
   \label{Figure borders Examples}
\end{figure}

\begin{definition}
For all $u,v\in \Sym{n}$, we define the \emph{$\mathrm{E}$-polygon} of $(u,v)$ (with respect to $n$ and $\alpha$), denoted $\mathrm{P}_n^\alpha(u,v)$, to be the $2n$-gon formed from the union of $\mathrm{B}_n^\alpha(u)$ and $\mathrm{B}_n^\alpha(v)$: $$\mathrm{P}_n^\alpha(u,v) = \mathrm{B}_n^\alpha(u) \bigcup \mathrm{B}_n^\alpha(v).$$ 
\end{definition}

Consequently, $\mathrm{P}_n^\alpha(u,v) = \mathrm{P}_n^\alpha(v,u)$. If $u = id$, we simplify $\mathrm{P}_n^\alpha(u,v)$ to $\mathrm{P}_n^\alpha(v)$. 

\begin{figure}[H]\vspace{-2cm}
    \hspace{-2cm}
    \includegraphics[width = 18cm, angle =0]{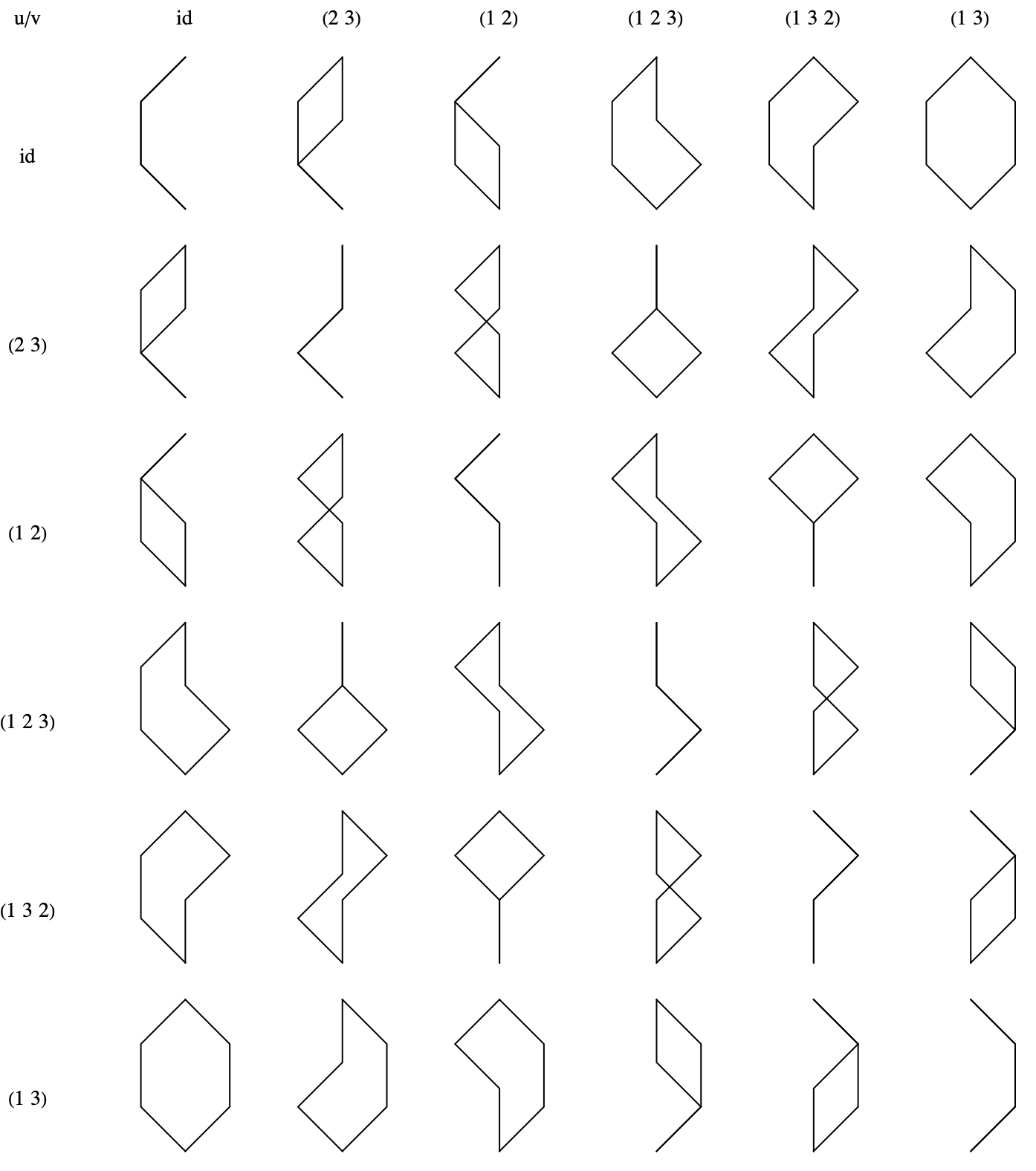}
    \caption{$\mathrm{P}_3^\alpha(u,v)$ for all $u,v \in \Sym{3}$ with $\alpha = \pi/4$.}
    \label{Figure All Etiles Sym3}
\end{figure}

\begin{figure}[H]\vspace{-2cm}
    \hspace{-2cm}
    \includegraphics[width = 18cm, angle =0]{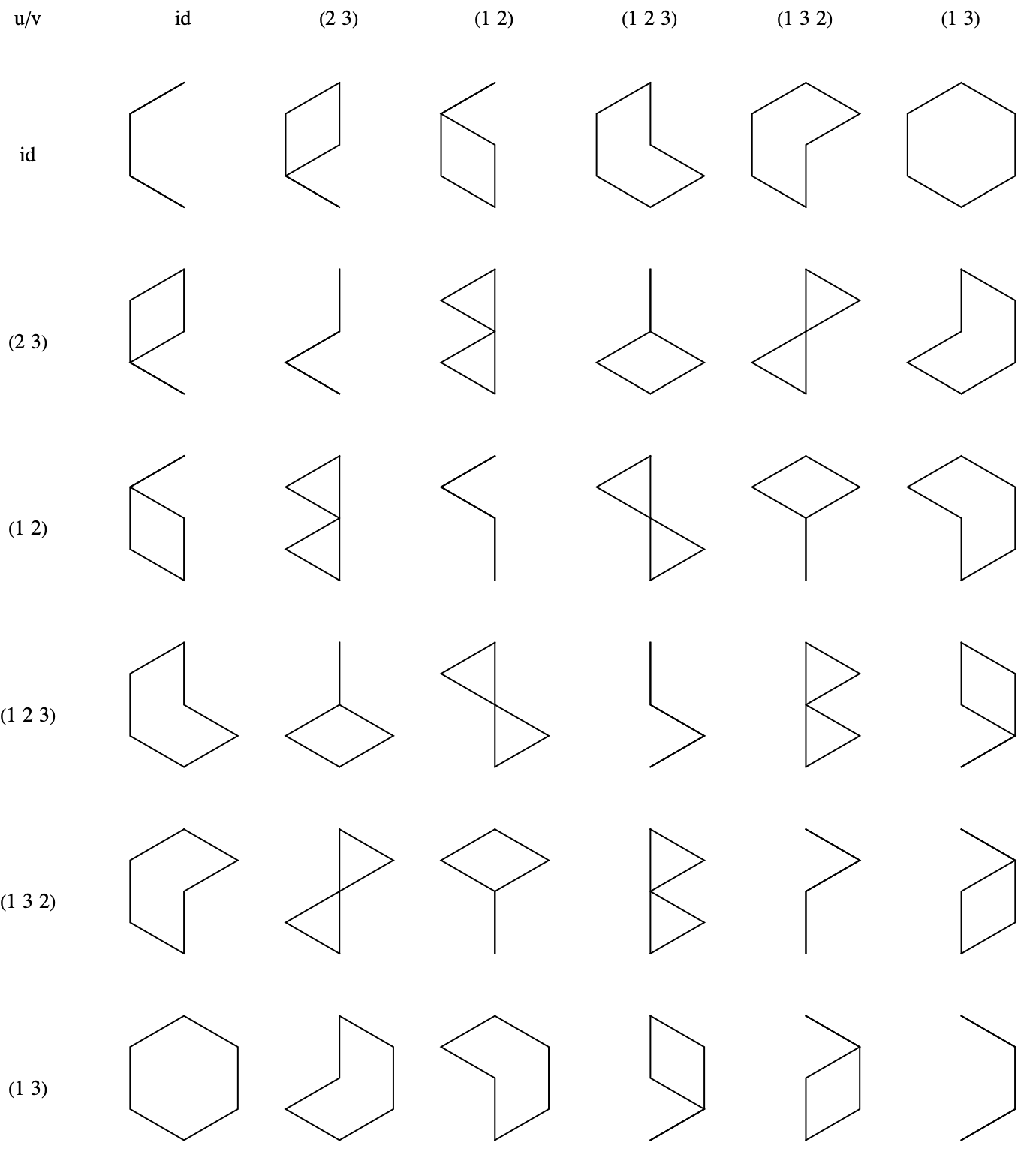}
    \caption{$\mathrm{P}_3^{\alpha}(u,v)$ for all $u,v \in \Sym{3}$ with $\alpha = \pi/6$.}
    \label{Figure Round Etiles S3}
\end{figure}
Note that, by construction, for all $u,v \in \Sym{n}$, $\mathrm{B}^\alpha_n(u) = \mathrm{B}^\alpha_n(v)$ if and only if $u= v$. 

It would be desirable to be able to define a sensible notion of when a pair of borders produce a tile - when does it make sense to do so? We give a crude but general notion of this. Given fixed $n$ and $\alpha$, all borders have the same maximal $y$-coordinate any point may achieve, namely, $h_n^\alpha:=\sum_{k=1}^n \sin\left(\dfrac{(k-1)(\pi - 2\alpha)}{n-1}+\alpha\right)$. For each $0\le y \le h_n^\alpha$, there is a unique point for each border with that $y$-coordinate. Denote the $x$-coordinate of this point by $\mathrm{H}(\mathrm{B}^\alpha_n(w),y)$ for $w \in W$ and $0 \le y \le h_n^\alpha$. 
\begin{definition}\label{precident}
For all $u,v \in W$ we say $\mathrm{B}^\alpha_n(u)$ \textit{precedes} $ \mathrm{B}^\alpha_n(v)$, denoted $\mathrm{B}^\alpha_n(u) \prec \mathrm{B}^\alpha_n(v)$, if for all $0 \le y \le h^\alpha_n$, $$\mathrm{H}(\mathrm{B}^\alpha_n(u),y) \le \mathrm{H}(\mathrm{B}^\alpha_n(v),y).$$
\end{definition}

One can define the interior of any $\mathrm{P}^\alpha_n(u,v)$ to be the union of the set of all line segments whose endpoints are $\mathrm{H}(\mathrm{B}^\alpha_n(u),y)$ and $\mathrm{H}(\mathrm{B}^\alpha_n(v),y)$. However we use the notion of precedence to determine when we assign the word \textit{tile} to some $E$-polygon for reasons that will become apparent in Section \ref{Bijections}.

\begin{definition}\label{tile}
For all $u,v \in W$ we call $\mathrm{P}^\alpha_n(u,v)$ a \textit{tile} if either $\mathrm{B}^\alpha_n(u) \prec \mathrm{B}^\alpha_n(v)$ or $\mathrm{B}^\alpha_n(v) \prec \mathrm{B}^\alpha_n(u)$.
\end{definition}

\begin{figure}[H]
    \centering\hspace{-2.5cm}
\begin{subfigure}[b]{0.1cm}
  \includegraphics[height = 4cm]{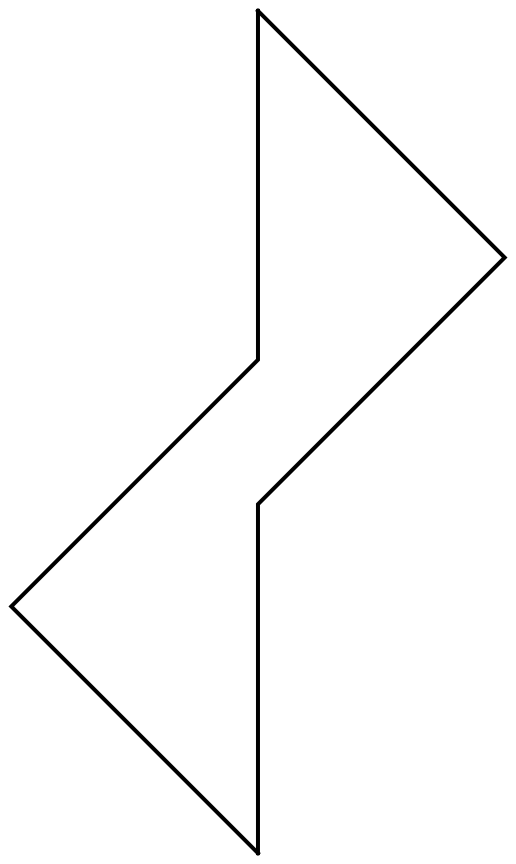}
\end{subfigure}\hspace{5cm}
\begin{subfigure}[b]{0.1cm}
  \includegraphics[height = 4cm]{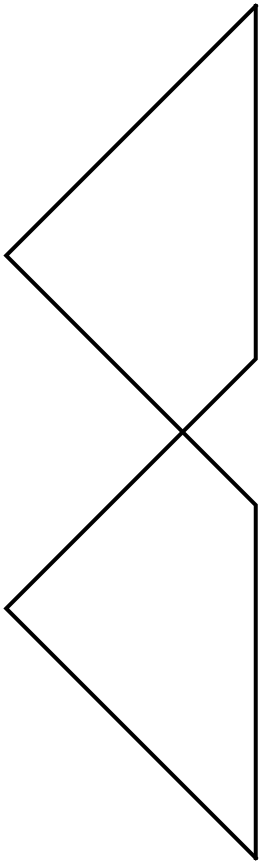}
\end{subfigure}    
\caption{For $W = \Sym{3}$, (left) $\mathrm{P}_3^{\pi/4}((2,3),(1,3,2))$ is a tile and (right) $\mathrm{P}_3^{\pi/4}((2,3),(1,2))$ is not.}
    \label{Figure Intersection Demo}
\end{figure}

We also note here that being a tile is dependent on the choice of $\alpha$ as Figure \ref{FigureAlphaDependance} shows.

\begin{figure}[H]
    \centering\hspace{-2.5cm}
\begin{subfigure}[b]{0.1cm}
  \includegraphics[height = 4cm]{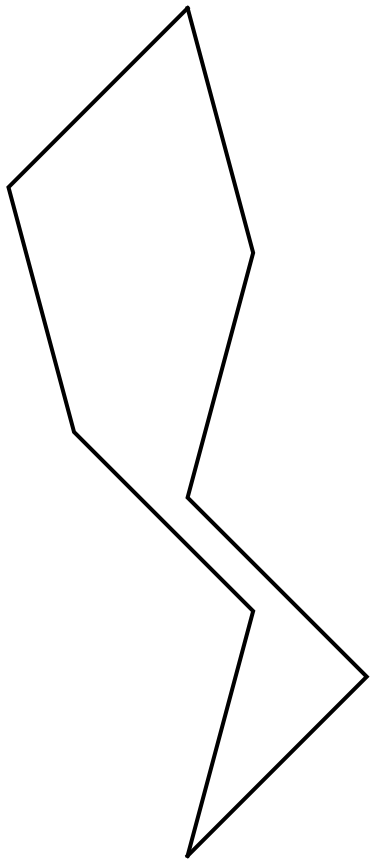}
\end{subfigure}\hspace{5cm}
\begin{subfigure}[b]{0.1cm}
  \includegraphics[height = 4cm]{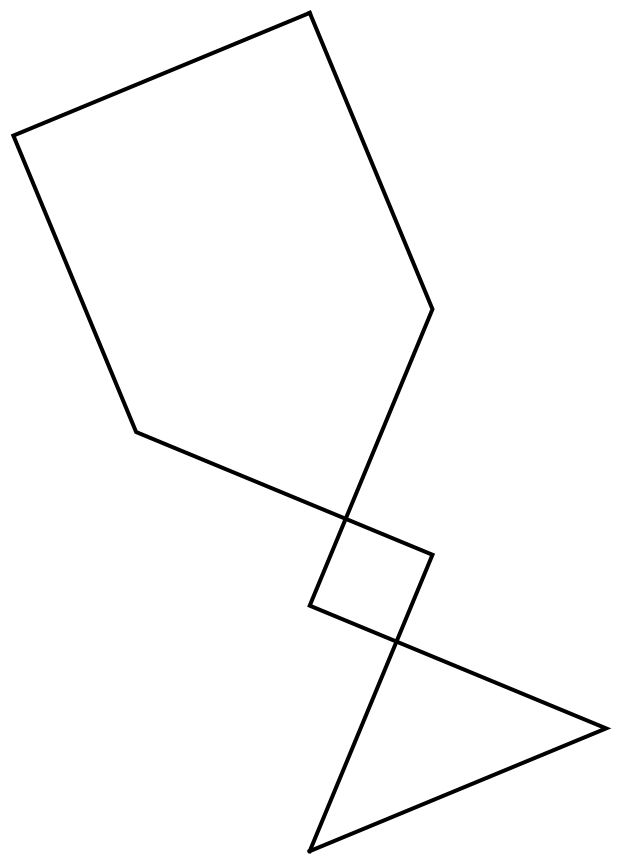}
\end{subfigure}    
\caption{For $W = \Sym{4}$, (left) $\mathrm{P}^{\pi/4}_4((1,2,3),(1,2,4))$ is a tile and (right) $\mathrm{P}^{\pi/8}_4((1,2,3),(1,2,4))$ is not.}
    \label{FigureAlphaDependance}
\end{figure}

\section{Proof of Theorem \ref{symmetric}}\label{Bijection}

The examples presented in the previous section show that self-intersections are dependent on the choice of the minimum angle of edges from the horizontal. Moreover, when this minimum is at least $\pi/4$ for $\Sym{3}$, these self-intersections are in bijection with incomparable elements in the strong Bruhat order. However, in general this is not the case as we observe in Figure \ref{figure criminal non interection} where we have $ W = \Sym{4}$ and elements $(1,2,3)$ and $(1,4,2)$ which are not comparable in the Bruhat order. This phenomenon is the only such pair (up to inverses) amongst the 87 non-comparable elements of $\Sym{4}$ which exhibit this behaviour. 

For what follows, when $\alpha = \pi/4$ we omit $\alpha$ from our notation. 

\begin{figure}[H]
     \centering
      \includegraphics[height=5cm]{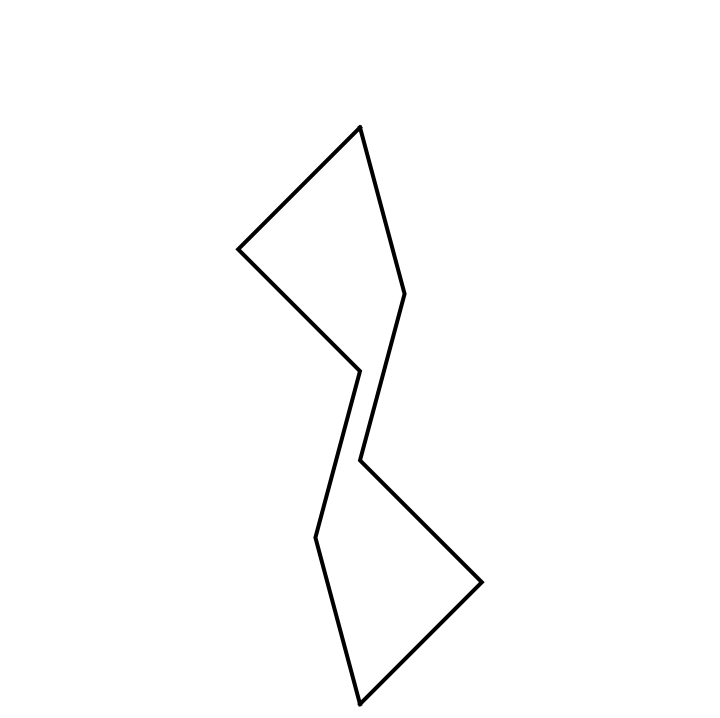}
\caption{For $W = \Sym{4}$, $\mathrm{P}_4((1,3,2),(1,2,4))$ .}
\label{figure criminal non interection}
\end{figure}

We are now ready to prove Theorem~\ref{symmetric} which by way of contrast demonstrates that any two Bruhat comparable elements of $\Sym{n}$ forms an $\mathrm{E}$-polygon which is a tile.  We emphasise that the proof is not dependent on consecutive edges of $\mathcal{B}^\alpha$ being spaced apart by angles of equal measure: they all need only to have absolute angle $\pi/4$ from the horizontal.

\begin{proof}
It's enough to observe this statement for a covering set of relations of the Bruhat order. That is, for all $w \in \Sym{n}$ and $t \in T$, if $w <_B wt$ and $\ell(wt) = \ell(w)+1$, then
$\mathrm{B}_n(w) \prec \mathrm{B}_n(wt)$. But, by Theorem~\ref{BruhatSym}, this is equivalent to the condition that $t = (a,b)$ with $a<b$ and $(a)w^{-1}<(b)w^{-1}$ and no such intermediate $k$ such that $a<k<b$ and $(a)w^{-1}<(k)w^{-1}<(b)w^{-1}$. Let us suppose this is the case. Then the images of $\mathcal{B}_n(w)$ and $\mathcal{B}_n(wt)$ must be identical apart from the transpositions of the vectors, $\beta_n^{(a)w^{-1}}$ and $\beta_n^{(b)w^{-1}}$. Since $a<b$,  $\beta_n^{(a)w^{-1}}$ appears in a lower position to $\beta_n^{(b)w^{-1}}$ as line segment in $\mathrm{B}^\alpha_n(w)$. From $(a)w^{-1}<(b)w^{-1}$, $\beta_n^{(a)w^{-1}}$'s $x$ coordinate is more negative than that of $\beta_n^{(b)w^{-1}}$ - intuitively meaning that $\beta_n^{(a)w^{-1}}$ points further left. Hence, $w <_B wt$ is true implies Figure \ref{borders} is a sufficiently accurate representation of the situation.

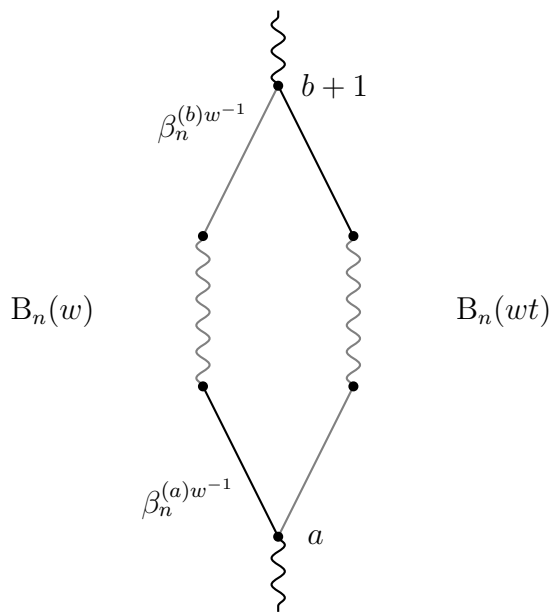
\begin{figure}[H]
    \centering
\begin{tikzpicture}[thick]
  \path [draw=black,snake it]
    (0,8) -- (0,9);
  \path [draw=black,snake it]
    (0,2) -- (0,1);
  \path [draw=gray,snake it]
    (-1,4) -- (-1,6);
  \path [draw=gray,snake it]
    (1,4) -- (1,6);
  \path[draw=black] 
  (0,8) -- (1,6);
   \path[draw=black] 
  (0,2) -- (-1,4);
  \path[draw=gray] 
  (0,8) -- (-1,6);
   \path[draw=gray] 
  (0,2) -- (1,4);
    \filldraw (0,2) circle[radius=1.5pt];
    \filldraw (0,8) circle[radius=1.5pt];
    \filldraw (-1,4) circle[radius=1.5pt];
    \filldraw (-1,6) circle[radius=1.5pt];
    \filldraw (1,4) circle[radius=1.5pt];
    \filldraw (1,6) circle[radius=1.5pt];
    \node (a) at (0.5,2) {$a$};
    \node (b) at (0.75,8) {$b+1$};
    \node (a) at (-1,7.5) {$\beta_n^{(b)w^{-1}}$};
    \node (b) at (-1.2,2.5) {$\beta_n^{(a)w^{-1}}$};
    \node (L) at (-3,5) {$\mathrm{B}_n(w)$};
    \node (R) at (3,5) {$\mathrm{B}_n(wt)$};
\end{tikzpicture}
    \caption{The borders $\mathrm{B}_n(w)$ and $\mathrm{B}_n(wt)$.}
    \label{borders}
\end{figure}

We define \emph{the critical region} to be the union of the edges $\mathrm{B}_n(w)_i$ and $\mathrm{B}_n(wt)_i$ for $\ a\le i\le b$. We will show that $\mathrm{B}_n(w)_i \cap \mathrm{B}_n(wt)_j = \emptyset$ for all $i,j \in \{a,\ldots, b\}$, excluding the common points of $\mathrm{B}_n(w)_a$ and $\mathrm{B}_n(wt)_a$ (the unique point of least $y$-coordinate) and $\mathrm{B}_n(w)_b$ and $\mathrm{B}_n(wt)_b$ (the unique point of largest $y$-coordinate). This is sufficient to prove $\mathrm{B}_n(w) \prec \mathrm{B}_n(w t)$.

Given two distinct vectors $\beta_n^{i},\beta_n^{j}\in \mathcal{B}_n$ with $i<j$, we call the difference between them, $\beta_n^{i} - \beta_n^{j}$, their \emph{difference vector}. We extend this notion to $\mathrm{B}_n(w)$ and $\mathrm{B}_n(wt)$ by defining the difference vector of these borders to be the difference vector of $\beta_n^{(a)w^{-1}}$ and $\beta_n^{(b)w^{-1}}$. Note that this difference vector is equal to the difference of $\Sigma_{j=1}^{c}\mathcal{B}^\alpha_n(w)_j$ and $\Sigma_{j=1}^{c}\mathcal{B}^\alpha_n(wt)_j$ for all  $a\le c < b$ respectively. 

We examine some of the properties our difference vectors may possess. Consider two distinct vectors in this region and let $\gamma$ and $\theta$ denote their angles from $\begin{pmatrix}
1\\
0
\end{pmatrix}$ measured in an anticlockwise rotation, with $\gamma<\theta$ say. By construction, the angles for each $\beta^i_n$ can possibly take is within the range $(\frac{\pi}{4},\frac{3\pi}{4})$. The gradient of the chord is the same as the tangent to the circle at the point that intersects the bisector of the chord. The bisector is that vector with angle $\frac{\gamma+\theta}{2}$ and hence the tangent has angle $\frac{\gamma+\theta}{2}-\frac{\pi}{2}$. So the range of gradients a difference vector can take is contained in the open interval $(-\frac{\pi}{4},\frac{\pi}{4})$.
\begin{figure}[H]
    \centering
    \begin{tikzpicture}[scale=0.8]
    \draw[black,thick] (5,0) arc (360:0:5);  
    \draw[gray,dashed] (-5,0) -- (5,0);  
    \draw[gray,dashed] (0,0) -- (3.5355,3.5355);  
    \draw[gray,dashed] (0,0) -- (-3.5355,3.5355); 
    \draw[scale=1,black,thick,-Latex] (0,0) -- (3,4);  
    \draw[scale=1,black,thick,-Latex] (0,0) -- (-1.913417,4.619397);
    \draw[blue,thick,-Latex]   (-1.913417,4.619397) -> (3,4);
    \filldraw (-1.913417,4.619397)  circle[black,radius=1.5pt];
    \filldraw (3,4) circle[black,radius=1.5pt];
    \filldraw[black,thick] (0,0)--(1,0) arc (0:45:1)--cycle; 
    \node (a) at (1.25,0.5) {\Large $\frac{\pi}{4}$};
    \filldraw[black,thick] (0,0)--(-1,0) arc (0:-45:-1)--cycle; 
    \node (a) at (-1.25,0.5) {\Large $\frac{\pi}{4}$};
    \end{tikzpicture}
    \caption{The difference chord between two vectors.}
    \label{chords}
\end{figure}
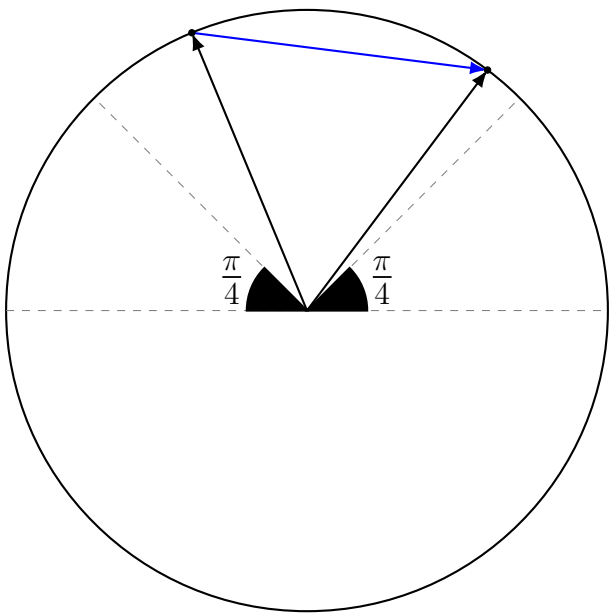

Suppose we do have an non-empty intersection of $\mathrm{B}_n(w)_i$ and $\mathrm{B}_n(wt)_j$. Without loss of generality, we may assume $a \le i \le b$. We consider the three cases of $|i-j|=0$, $|i-j|=1$ and $|i-j|\ge 2$ separately.

If $|i-j|=0$, then $\mathrm{B}_n(w t)_i = \mathrm{B}_n(w)_i$ and hence they are non-equal, parallel edges and so do not intersect. If $i = a$ or $i = b$ then the vectors only intersect in their common vertices. All other vectors are equal and not in the critical region.

Next, we now show that if $|i-j|=1$ we still have no intersections in the critical region. Suppose, without loss of generality, that $\mathrm{B}_n(w)_i$ intersects $\mathrm{B}_n(w t)_{i+1}$ giving us the scenario described in Figure \ref{Figure Intersection Rhombus}:

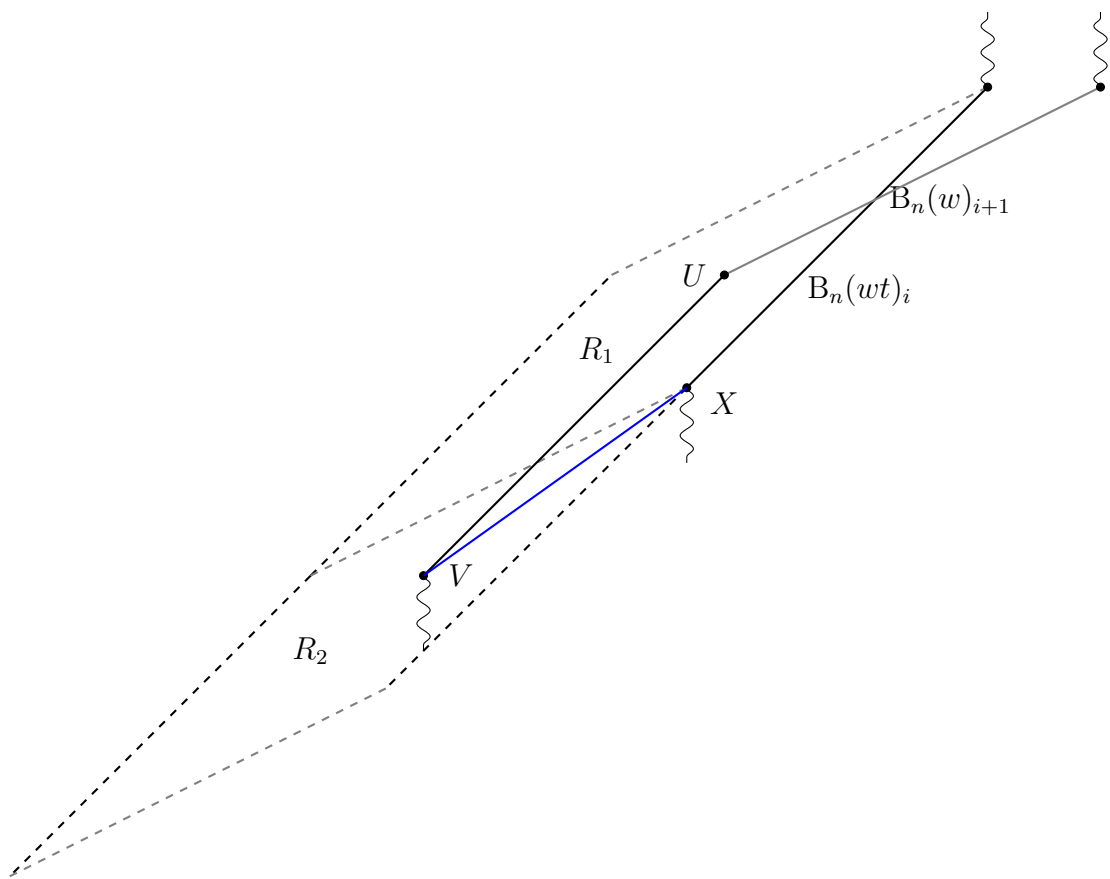
\begin{figure}[H]
    \hspace{-1cm}
    \begin{tikzpicture}[scale=1.0]
      \draw[black,thick,dashed] (0,0) -- (4,4);
            \draw[black,thick,dashed] (0,0) -- (-4,-4);
            \draw[black,thick,dashed] (5,2.5) -- (1,-1.5);
            \draw[gray,thick,dashed] (-4,-4) -- (1,-1.5);
       \node[black] at (7.3,3.8) {$\mathrm{B}_n(w t)_{i}$};
      \draw[gray,thick,dashed] (0,0) -- (5,2.5);
       \node[black] at (8.5,5) {$\mathrm{B}_n(w)_{i+1}$};
      \draw[black,thick] (5,2.5) -- (9,6.5);
       \draw[gray,thick,dashed] (4,4) -- (9,6.5);
      \filldraw (5,2.5) circle[black,radius=1.5pt];  
\filldraw (9,6.5) circle[black,radius=1.5pt]; 
    \path [draw=black,snake it]
    (5,2.5) -- (5,1.5);
    \path [draw=black,snake it]
    (10.5,6.5) -- (10.5,7.5);
      \draw[gray,thick] (5.5,4) -- (10.5,6.5);
    \draw[black,thick] (5.5,4) -- (1.5,0);
      \filldraw (5.5,4) circle[black,radius=1.5pt];  
    \filldraw (1.5,0) circle[black,radius=1.5pt];
\filldraw (10.5,6.5) circle[black,radius=1.5pt]; 
    \path [draw=black,snake it]
    (1.5,0) -- (1.5,-1);
    \path [draw=black,snake it]
    (9,6.5) -- (9,7.5);
    \draw[blue,thick] (1.5,0) -- (5,2.5); 
    \node[black] at (2,0) {$V$};
    \node[black] at (5.5,2.3) {$X$};
    \node[black] at (5.1,4) {$U$};
    \node[black] at (3.8,3) {$R_1$};
    \node[black] at (0,-1) {$R_2$};
 \end{tikzpicture}
    \caption{The rhombic region labelled $R_2$ for which the lower vertex of $\mathrm{B}_n(w)_{i}$ lies in if and only if  $\mathrm{B}_n(w)_{i+1}$ and $\mathrm{B}_n(w t)_{i}$ intersect.}
    \label{Figure Intersection Rhombus}
\end{figure}

Let $U$ and $V$ be the upper and lower vertices of $\mathrm{B}_n(w)_{i+1}$ respectively and $X$ be the lower vertex of $\mathrm{B}_n(wt)_{i}$. Note that $U$ is in the rhombic region labelled $R_1$ if and only if $\mathrm{B}_n(w)_{i+1}$ and $\mathrm{B}_n(wt)_{i}$ intersect. Equivalently, this is true exactly when $V$ is in the  region labelled $R_2$. Observe that the line segment from $V$ to $X$ is equivalent to the difference vector of $\mathrm{B}_n(w)$ and $\mathrm{B}_n(wt)$. But then $V$ is in $R_2$ if and only if the difference vector has angle from $\begin{pmatrix}
1\\
0
\end{pmatrix}$ strictly between $\beta_n^{a^{-1}(w)}$ and  $\beta_n^{b^{-1}(w)}$. But this is a contradiction as we saw the angles of $\beta_n^{a^{-1}(w)}$ and $\beta_n^{b^{-1}(w)}$ lie in $[\pi/4, 3\pi/4]$ whereas the angles of difference vectors lie in the disjoint, open interval $(-\pi/4,\pi/4)$.

For $|i-j|\ge 2$ we first consider $|i-j|= 2$ where the situation pictured in Figure \ref{Figure Rhombic Heights} applies.

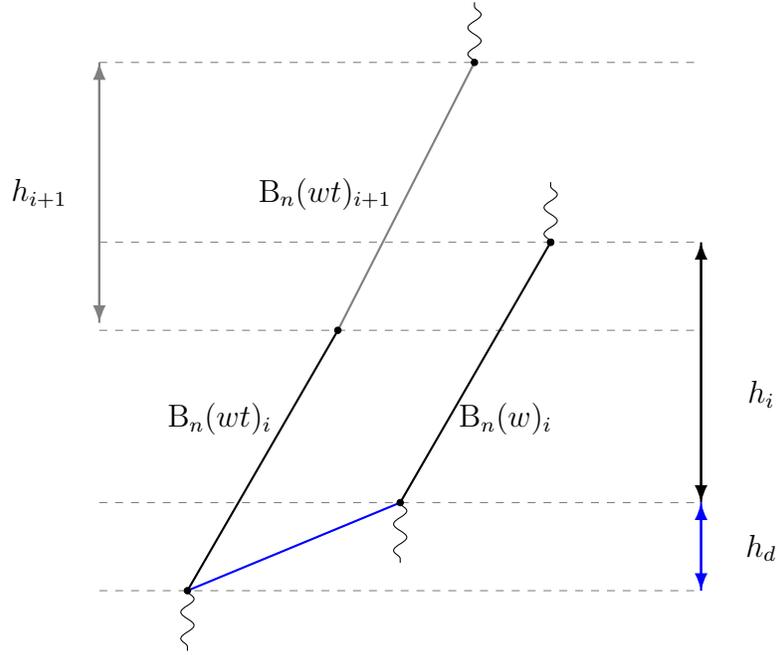
\begin{figure}[H]
    \centering
    \begin{tikzpicture}[scale=4]
    \draw[scale=1,gray,dashed] (-1,0.70710678118) -- (1,0.70710678118);
    \draw[scale=1,gray,dashed] (-1,0.70710678118+0.86602540378) -- (1,0.70710678118+0.86602540378);
    \draw[scale=1,gray,dashed] (-1,1) -- (1,1);
    \draw[scale=1,gray,dashed] (-1,0.70710678118+0.86602540378+
    0.89100652418) -- (1,0.70710678118+0.86602540378+
    0.89100652418);
    \draw[scale=1,gray,dashed] (-1,1+0.86602540378) -- (1,1+0.86602540378);

    \draw[scale=1,black,-Latex,thick] (1,1) -> (1,1+0.86602540378);
    \draw[scale=1,black,-Latex,thick]  (1,1+0.86602540378)->(1,1);
    \node[black] at (1.2,1.36) {$h_i$};

    \draw[scale=1,blue,-Latex,thick] (1,0.70710678118) -> (1,1);
    \draw[scale=1,blue,-Latex,thick]  (1,1)->(1,0.70710678118) ;
    \node[black] at (1.2,0.85) {$h_d$};
    
    \draw[scale=1,gray,-Latex,thick]
    (-1,+0.70710678118+
    0.89100652418) -> (-1,0.70710678118+0.86602540378+
    0.89100652418);
    \draw[scale=1,gray,-Latex,thick]  (-1,0.70710678118+0.86602540378+
    0.89100652418)->(-1,0.707106781188+
    0.89100652418) ;
    \node[black] at (-1.2,0.43+0.707106781188+
    0.89100652418) {$h_{i+1}$};
    
    \node[black] at (-0.25,0.43+0.707106781188+
    0.89100652418) {$\mathrm{B}_n(wt)_{i+1}$};
    \node[black] at (-0.6,1.28) {$\mathrm{B}_n(wt)_{i}$};
    \node[black] at (0.35,1.28) {$\mathrm{B}_n(w)_{i}$};
    
    \draw[scale=1,blue,thick] (0,1) -- (-0.70710678118,0.70710678118);  
    \draw[scale=1,gray,thick] (-0.70710678118+0.5,0.70710678118+0.86602540378) -- (-0.70710678118+0.5+0.45399049974,0.70710678118+0.86602540378+
    0.89100652418);  
    \draw[scale=1,black,thick] (0+0.5,1+0.86602540378) -- (0,1);  
    \draw[scale=1,black,thick] (-0.70710678118+0.5,0.70710678118+0.86602540378) -- (-0.70710678118,0.70710678118);
    \path [draw=black,snake it]
    (-0.70710678118+0.5+0.45399049974,0.70710678118+0.86602540378+
    0.89100652418)  -- (-0.70710678118+0.5+0.45399049974,0.70710678118+0.86602540378+
    0.89100652418+0.2);  
    \path [draw=black,snake it]
    (0,1) -- (0,0.8);
    \path [draw=black,snake it]
    (-0.70710678118,0.70710678118) -- (-0.70710678118,0.70710678118-0.2);
    \filldraw (-0.70710678118,0.70710678118) circle[black,radius=0.3pt];
    \path [draw=black,snake it]
    (0+0.5,1+0.86602540378) -- (0+0.5,1+0.86602540378+0.2);
    \filldraw (0,1) circle[black,radius=0.3pt];
    \filldraw (0+0.5,1+0.86602540378) circle[black,radius=0.3pt];
    \filldraw (0+0.5,1+0.86602540378) circle[black,radius=0.3pt];
    \filldraw (-0.70710678118+0.5,0.70710678118+0.86602540378) circle[black,radius=0.3pt];
    \filldraw
    (-0.70710678118+0.5+0.45399049974,0.70710678118+0.86602540378+
    0.89100652418)circle[black,radius=0.3pt]; 
    \end{tikzpicture}
    \caption{The heights concerning the $|i-j|\ge 2$ case.}
    \label{Figure Rhombic Heights}
\end{figure}

Note that $\mathrm{B}_n(wt)_i$ is a translation of $\mathrm{B}_n(w)_i$ since $i$ is in the critical strip. If the $y$-coordinate of $\mathrm{B}_n(w)_i$ is less than that of $\mathrm{B}_n(wt)_{i+2}$, then an intersection between  $\mathrm{B}_n(wt)_{i+2}$ and $\mathrm{B}_n(w)_i$ is impossible. So we consider the inequality $h_d+h_i < h_i+h_{i+1}$, or equivalently, $h_d < h_{i+1}$. Note that $h_d$ is bounded above by  $1-\dfrac{\sqrt{2}}{2}$ and $h_{i+1}$ is bounded below by $\dfrac{\sqrt{2}}{2}$. But $1-\dfrac{\sqrt{2}}{2} < \dfrac{\sqrt{2}}{2}$ and so $h_d \le 1-\dfrac{\sqrt{2}}{2} < \dfrac{\sqrt{2}}{2} \le h_{i+1}$. Therefore, $\mathrm{B}_n(w)_i$ and $\mathrm{B}_n(wt)_{i+1}$ certainly do not intersect.
But since the $y$-coordinate is strictly increasing in borders we know that no intersection can occur for all $|i-j|\ge 2$ also.

\end{proof}

Taking $\alpha = \pi/4$ ensures that for the case $|i-j| = 1$, $\mathrm{B}_n(w)_i$ and $\mathrm{B}_n(wt)_j$ have an empty intersection. However if $\alpha < \pi/4$, a non-trivial intersection can occur for some sufficiently large $n$. So $\alpha = \pi/4$ is sharp precisely in this sense.  When $|i-j|\ge 2$, the least $\alpha$ needed to ensure that $\mathrm{B}_n(w)_i$ and $\mathrm{B}_n(wt)_j$ have an empty intersection in the critical region is $\alpha = \pi/6$.

An important corollary evident from the proof of Theorem \ref{symmetric} is that $\mathrm{B}_n(w) \cap \mathrm{B}_n(wt)$ differ on exactly those points on edges in the so-called critical region. Furthermore, in that region, the points on $\mathrm{B}_n(wt)$ have an $x$-coordinate greater than their $\mathrm{B}_n(w)$ counterparts of the same $y$-coordinate. Putting this more precisely, we have Corollary \ref{sym col}.

\begin{corollary}\label{sym col}
Suppose $w,t\in \mathrm{Sym}(n)$ with $t \in T$ and $w <_B wt$ and $\ell(wt) = \ell(w)+1$. Write $t = (a,b)$ for some $1\le a < b \le n$. Then $\mathrm{H}(\mathrm{B}_n(w),y) < \mathrm{H}(\mathrm{B}_n(wt),y)$ for all $y$ satisfying $$\sum_{k=1}^{a-1}\sin\left(\dfrac{((k-1)w)(\pi - 2\alpha)}{n-1}+\alpha\right)<y<\sum_{k=1}^{b}\sin\left(\dfrac{((k-1)w)(\pi - 2\alpha)}{n-1}+\alpha\right),$$ (declaring $\sum_{k=1}^{a-1}\sin\left(\dfrac{((k-1)w)(\pi - 2\alpha)}{n-1}+\alpha\right) = 0$ for when $a=1$), and $\mathrm{H}(\mathrm{B}_n(w),y) = \mathrm{H}(\mathrm{B}_n(wt),y)$ otherwise.
\end{corollary}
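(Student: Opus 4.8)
The plan is to read the statement directly off the analysis carried out in the proof of Theorem~\ref{symmetric}, upgrading the weak precedence $\mathrm{B}_n(w)\prec\mathrm{B}_n(wt)$ to the pointwise strict/equal dichotomy claimed here. First I would record precisely how the two borders differ. Writing $t=(a,b)$ and using $(wt)^{-1}=tw^{-1}$, the ordered edge sequences satisfy $\mathcal{B}_n(wt)_j=\beta_n^{(j)w^{-1}}=\mathcal{B}_n(w)_j$ for every $j\notin\{a,b\}$, while at positions $a$ and $b$ the vectors $\beta_n^{(a)w^{-1}}$ and $\beta_n^{(b)w^{-1}}$ are interchanged. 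Consequently the two borders agree edge-for-edge in positions $1,\dots,a-1$ and $b+1,\dots,n$, and the partial sums $\sum_{j=1}^{c}\mathcal{B}_n(w)_j$ and $\sum_{j=1}^{c}\mathcal{B}_n(wt)_j$ coincide for all $c\le a-1$ and all $c\ge b$. In particular the borders pass through a common lowest vertex of the critical region (after $a-1$ edges) and a common highest vertex (after $b$ edges), and they are identical as point sets away from the critical region. This already yields the ``$=$ otherwise'' half of the claim.

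Next I would pin down the interval. Because every underlying vector has strictly positive $y$-component, the height increases strictly along each border, so $\mathrm{H}(\mathrm{B}_n(w),\cdot)$ and $\mathrm{H}(\mathrm{B}_n(wt),\cdot)$ are well defined; moreover the $y$-coordinate of the common lowest vertex is the sum of the $y$-components of $\beta_n^{(1)w^{-1}},\dots,\beta_n^{(a-1)w^{-1}}$, and that of the common highest vertex is the corresponding sum up to index $b$. These are exactly the two bounds displayed in the statement, so the only $y$ for which the two borders can possibly differ lie in the open interval between them.

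It remains to prove the strict inequality on that open interval, and here the work of Theorem~\ref{symmetric} does the heavy lifting: it shows that inside the critical region the edges of $\mathrm{B}_n(w)$ and $\mathrm{B}_n(wt)$ meet only at the common lowest and highest vertices. Hence on the open interval the continuous functions $\mathrm{H}(\mathrm{B}_n(w),\cdot)$ and $\mathrm{H}(\mathrm{B}_n(wt),\cdot)$ are never equal. To fix their order I would examine $y$ just above the lowest vertex: there $\mathrm{B}_n(w)$ departs along $\beta_n^{(a)w^{-1}}$ and $\mathrm{B}_n(wt)$ along $\beta_n^{(b)w^{-1}}$, and since $(a)w^{-1}<(b)w^{-1}$ the latter has the strictly larger $x$-component, so $\mathrm{H}(\mathrm{B}_n(w),y)<\mathrm{H}(\mathrm{B}_n(wt),y)$ for such $y$. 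As the difference $\mathrm{H}(\mathrm{B}_n(wt),y)-\mathrm{H}(\mathrm{B}_n(w),y)$ is continuous, vanishes only at the two endpoints, and is positive near the bottom, the intermediate value theorem forces it to remain positive throughout the open interval.

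I expect no serious obstacle, since the geometric content is entirely contained in Theorem~\ref{symmetric} and what remains is bookkeeping. The one point needing genuine care is the translation from the edgewise non-intersection statement to the strict separation of the height functions, that is, verifying that ``no crossing in the open region'' together with ``strictness immediately above the lowest vertex'' propagates to strictness everywhere; the connectedness argument above handles this, but one should confirm that the endpoints are genuinely shared (so that the stated open interval is exactly correct) and that no interior coincidence of heights can occur without an actual edge intersection.
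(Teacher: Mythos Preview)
Your proposal is correct and follows exactly the route the paper intends: the corollary is stated without a separate proof, merely as ``evident from the proof of Theorem~\ref{symmetric}'', and you have supplied precisely that reading---identifying the critical region via the shared partial sums, invoking the edgewise non-intersection established there, and converting it into the strict height inequality by a continuity/connectedness argument. The only addition you make beyond what the paper sketches is the intermediate-value step fixing the sign of the difference, which is the right way to make the informal ``$\mathrm{B}_n(wt)$ lies to the right'' rigorous.
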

\section{Tiling Bijections}\label{Bijections}
In this closing section we extend the definitions of Section
\ref{section E-polygons} to general E-embeddings and prove the consequent bijections.
\begin{definition}
Let $\varphi: W \hookrightarrow \mathrm{Sym}(n)$ be an  E-embedding. Then we define $\mathrm{B}^\alpha_\varphi(w)$ to be $\mathrm{B}^\alpha_n(\varphi(w))$ and we set $\mathrm{P}^\alpha_\varphi(u,v)$ to be $\mathrm{P}^\alpha(\varphi(u),\varphi(v))$ for all $u,v,w \in W$.
\end{definition}

We define the \textit{tiling} of a reduced expression.

\begin{definition}
Let $\varphi: W \hookrightarrow \mathrm{Sym}(n)$ be an E-embedding. Let $s_{i_1}s_{i_2}\ldots s_{i_k}$ be a reduced expression of some $w \in W$. Then we define the \textit{tiling} of $s_{i_1}s_{i_2}\ldots s_{i_k}$ to be $$\mathrm{T}_\varphi(s_{i_1}s_{i_2}\ldots s_{i_k}) = \bigcup_{j=0}^k \mathrm{B}^\alpha_\varphi(s_{i_1}\ldots s_{i_j})$$ where it is agreed that when $j=0$,  $\mathrm{B}^\alpha_\varphi(s_{i_1}\ldots s_{i_j}) = \mathrm{B}^\alpha_\varphi(id)$. 
\end{definition}

We now prove Theorem \ref{lemma E-embedding}. 

\begin{proof}
Suppose $w = s_{i_1}\ldots s_{i_k}$ is a reduced word for some element $w \in W$. Necessarily, we have 
\begin{align*}
    &\mathrm{id} <_R s_{i_1} <_R \ldots <_R s_{i_1}\ldots s_{i_k}. \\ \intertext{Since $\varphi$ is an $E$-embedding, we know }
    &\varphi(\mathrm{id}) <_B \varphi(s_{i_1}) <_B \ldots <_B \varphi(s_{i_1}\ldots s_{i_k}) \\ \intertext{and then Theorem \ref{symmetric} implies }
    &\mathrm{B}_\varphi(\mathrm{id}) \prec \mathrm{B}_\varphi(s_{i_1}) \prec \ldots \prec \mathrm{B}_\varphi(s_{i_1}\ldots s_{i_k}). 
\end{align*}
\end{proof}

The significance of this theorem is that what we call tilings are indeed deserving of their name. Since $\mathrm{B}_\varphi(\mathrm{id}) \prec \mathrm{B}_\varphi(s_{i_1}) \prec \ldots \prec \mathrm{B}_\varphi(s_{i_1}\ldots s_{i_k}) $ for each reduced word, each of $\mathrm{P}_\varphi(\mathrm{id},s_{i_1}), \mathrm{P}_\varphi(s_{i_1},s_{i_1}s_{i_2}) , \ldots , \mathrm{P}_\varphi(s_{i_1}\ldots s_{i_{k-1}},s_{i_1}\ldots s_{i_k})$ forms a tile and set of the interiors of these tiles partition the interior of the polygon $\mathrm{P}_\varphi(s_{i_1}\ldots s_{i_k})$.

\begin{definition}
Let $\varphi: W \hookrightarrow \mathrm{Sym}(n)$ be an E-embedding and $w \in W$. Let $\mathcal{T}^\alpha_\varphi(w)$ be the set consisting of all $\mathrm{T}_\varphi(s_{i_1}s_{i_2}\ldots s_{i_k})$ for all reduced words $s_{i_1}s_{i_2}\ldots s_{i_k}$ evaluating to $w$.
\end{definition}

\begin{definition}
Let $\varphi: W \hookrightarrow \mathrm{Sym}(n)$ be an  E-embedding. Let $r,s \in S$, then we define $J_{\varphi}$ so that $\{r,s\} \in J_{\varphi}$ if and only if for all reduced words containing a consecutive subword of $\underbrace{srs\ldots }_{m_{s,r}}$, we have
$$\mathrm{T}_\varphi(s_{i_1}\ldots s_{i_\ell }\underbrace{srs\ldots }_{m_{s,r}} s_{i_r}\ldots s_{i_j}) = \mathrm{T}_\varphi(s_{i_1}\ldots s_{i_\ell }\underbrace{rsr\ldots }_{m_{s,r}} s_{i_r}\ldots s_{i_j}).$$
\end{definition}

Recall that if $\varphi(s) = \prod_{i=1}^k(a_i,b_i)$ with $a_i<b_i$ for each $i= 1,\ldots, k$, then we define $\mathrm{I}_\varphi(s) = \bigcup_{i=1}^{k}\{a_i,a_i+1\ldots,b_i\}$. Necessarily, there must exists a unique  choice of subsets $\{a'_1,\ldots,a'_{k'} \} \subseteq \{a_1,\ldots,a_k\}$,  and $\{b'_1,\ldots,b'_{k'}\} \subseteq \{b_1,\ldots,b_k\}$ such that $$\mathrm{I}_\varphi(s) = \bigcup_{m=1}^{k'}\{a'_m,a'_m+1,\ldots,b'_m\}$$ and that the intervals in this union are pairwise disjoint. We call this the disjoint form of $\mathrm{I}_\varphi(s)$ and $a'_i$ and $b'_i$ its $i^{th}$ disjoint representatives.
Given this notation, we can provide a natural extension to Corollary \ref{sym col} in the setting of E-embeddings. This shows that support intervals determine exactly the points at which the borders for $w$ and $ws$ differ.

\begin{lemma}\label{col reps}
Let $\varphi : W \hookrightarrow \mathrm{Sym}(n)$ be an E-embedding. Let $w \in W$ and $s \in S$ be such that $w <_R ws$. Suppose $\mathrm{I}_\varphi(s) = \bigcup_{m=1}^{k'}\{a'_m,a'_m+1,\ldots,b'_m\}$ in disjoint form. Then $\mathrm{H}(\mathrm{B}_\varphi(w),y) < \mathrm{H}(\mathrm{B}_\varphi(ws),y)$ if and only if for some $m\in\{1,\ldots,k'\}$, $$\sum_{i=1}^{a'_m-1}\sin\left(\dfrac{((i-1)\varphi(w))(\pi - 2\alpha)}{n-1}+\alpha\right)<y<\sum_{i=1}^{b'_m}\sin\left(\dfrac{((i-1)\varphi(w))(\pi - 2\alpha)}{n-1}+\alpha\right)$$  and  $\mathrm{H}(\mathrm{B}_\varphi(w),y) = \mathrm{H}(\mathrm{B}_\varphi(ws),y)$ otherwise. 
\end{lemma}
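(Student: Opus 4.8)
The plan is to transport the statement into $\mathrm{Sym}(n)$ and then reduce it, block by block, to Corollary~\ref{sym col}. Write $\sigma=\varphi(w)$ and $\sigma'=\varphi(ws)=\sigma\,\varphi(s)$. Since $s$ is an involution so is $\varphi(s)$, hence $\varphi(s)$ is a product of the disjoint transpositions $(a_i,b_i)$ recorded in $\mathrm{I}_\varphi(s)$; consequently $\varphi(s)$ stabilises setwise each connected block $C_m=\{a'_m,\ldots,b'_m\}$ of the disjoint form of $\mathrm{I}_\varphi(s)$. Recalling that the $i$-th edge of $\mathrm{B}_n(\tau)$ is $\beta_n^{(i)\tau^{-1}}$ and that $(\sigma')^{-1}=\varphi(s)\sigma^{-1}$, the borders $\mathrm{B}_\varphi(w)=\mathrm{B}_n(\sigma)$ and $\mathrm{B}_\varphi(ws)=\mathrm{B}_n(\sigma')$ carry identical edge-vectors in every position outside $\mathrm{I}_\varphi(s)$, while inside each $C_m$ the two edge-sequences are permutations of one another. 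Throughout I write $\theta_k=\frac{(k-1)(\pi-2\alpha)}{n-1}+\alpha$ for the angle of $\beta_n^k$.

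Because $\varphi$ is an E-embedding, $w<_R ws$ gives $\sigma<_B\sigma'$, so Theorem~\ref{symmetric} already delivers the weak inequality $\mathrm{H}(\mathrm{B}_\varphi(w),y)\le\mathrm{H}(\mathrm{B}_\varphi(ws),y)$ for every $y$. To locate where equality holds I would compute the total displacement of a border across a block $C_m$, namely $\sum_{i\in C_m}\beta_n^{(i)\tau^{-1}}$, which depends only on the multiset of edges in $C_m$; since $\varphi(s)$ merely permutes these, both borders accumulate the same vector across each block. Starting from the common origin and tracing through the identical non-support edges, the two borders therefore coincide at every block boundary, and hence at every height outside the open intervals $\big(\sum_{i<a'_m}\sin\theta_{(i)\sigma^{-1}},\ \sum_{i\le b'_m}\sin\theta_{(i)\sigma^{-1}}\big)$. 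This establishes the ``otherwise'' clause and, at the same time, shows that the endpoints of the claimed $y$-intervals are the same whether computed from $\sigma$ or from $\sigma'$.

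It remains to prove strict inequality on the open interval attached to each $C_m$. Here I would exploit that $C_m$ is a contiguous interval of positions: the restriction of $\sigma<_B\sigma'$ to $C_m$ is again a Bruhat relation, realised by the restriction $\varphi(s)|_{C_m}$, and any transposition with both endpoints in $C_m$ lifts to a genuine Bruhat cover of the global permutation precisely when it is a cover locally (the ``no intermediate'' condition of Theorem~\ref{BruhatSym} only sees positions in the interval), changing only the $C_m$-block of the border. I would therefore lift a saturated chain of Bruhat covers from the $C_m$-arrangement of $\sigma$ to that of $\sigma'$, apply Corollary~\ref{sym col} to each cover to see that the border moves strictly rightwards on that cover's critical sub-interval and not at all elsewhere, and compose: since no cover ever moves the border leftwards, one gets $\mathrm{H}(\mathrm{B}_\varphi(ws),y)>\mathrm{H}(\mathrm{B}_\varphi(w),y)$ at every height covered by at least one critical sub-interval.

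The main obstacle is the final bookkeeping, namely that the critical sub-intervals of such a chain leave no gap inside $\big(\sum_{i<a'_m}\sin\theta_{(i)\sigma^{-1}},\ \sum_{i\le b'_m}\sin\theta_{(i)\sigma^{-1}}\big)$. I would argue this from the connectivity of $C_m$: if the two borders met at some interior height, the partial displacements across the lower portion of $C_m$ would agree, which ultimately forces $\varphi(s)$ to stabilise a proper initial sub-block of $C_m$ and hence splits $\mathrm{I}_\varphi(s)$ there, contradicting that $C_m$ is a single block of the disjoint form. Making this clean is delicate because the two borders have \emph{different} vertex heights within $C_m$, so an interior coincidence need not occur at a shared vertex; ruling out a mid-edge crossing is exactly where the hypothesis that all edge angles lie in $[\pi/4,3\pi/4]$ must be re-invoked, via the difference-vector estimate used in the proof of Theorem~\ref{symmetric}.
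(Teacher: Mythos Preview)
Your approach is essentially the paper's: pass to $\mathrm{Sym}(n)$, use that $\varphi(s)$ preserves each disjoint block $C_m$ setwise, realise $\varphi(w)<_B\varphi(ws)$ by a saturated Bruhat chain whose transpositions lie in the parabolics $\mathrm{Sym}(C_m)$ (the paper invokes Corollaries~1.4.4 and~1.4.8 of Bj\"orner--Brenti for this, whereas you argue the local-to-global lift directly from the cover criterion of Theorem~\ref{BruhatSym}), and then apply Corollary~\ref{sym col} to each cover. The paper's proof stops at that point with ``whence the result'' and does not address the issue you flag as the main obstacle---that the union of critical sub-intervals along the chain must exhaust the whole block height-interval---so your version is, if anything, more scrupulous than the original about where the argument is delicate; your proposed connectivity resolution (an interior coincidence would force $\varphi(s)$ to stabilise a proper initial segment of $C_m$, splitting the block) is the natural one and is not supplied in the paper.
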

\begin{proof}
Since $w<_R ws$ and $\varphi$ is an E-embedding, $\varphi(w) <_B \varphi(ws)$. Therefore, there exists a sequence of transpositions in $\mathrm{Sym}(n)$, $t_1,\ldots, t_\ell$, such that 
$$\varphi(w) <_B \varphi(w)t_1 <_B \ldots <_B \varphi(w)t_1\ldots t_{\ell-1} <_B \varphi(w)t_1\ldots t_\ell= \varphi(ws)$$ and $\ell(\varphi(w)t_1\ldots t_{j}) =\ell(\varphi(w)) +j.$  

Consider $\varphi(s)$ restricted to a given interval, $\{a'_{m},\ldots,b'_{m}\}$, and call this induced permutation $\varphi_{|_m}(s)$. This is an element of the symmetric group $\mathrm{Sym}(\{a'_{m},\ldots,b'_{m}\})$, itself a parabolic subgroup of $\mathrm{Sym}(n) = \mathrm{Sym}(\{1,\ldots,n\})$. Using Corollaries 1.4.4 and 1.4.8 of \cite{BB}, we know that any reduced expression of $\varphi_{|_m}(s)$ is the product of adjacent transpositions using only those in $\mathrm{Sym}(\{a'_{m},\ldots,b'_{m}\})$. Furthermore, to write $\varphi_{|_m}(s)$ as the minimal product of (not necessarily adjacent) transpositions, $\varphi_{|_m}(s) = t'_1,\ldots t'_{\ell'}$ with $\ell(t'_1,\ldots t'_{i}) = i $, implies each $t'_i$ is in $\mathrm{Sym}(\{a'_{m},\ldots,b'_{m}\})$ also. For each $m$, we can repeatedly apply Corollary \ref{sym col} to each $t'_i$ for $i=1,\ldots,\ell'$ whence the result.
\end{proof}

We now prove Lemma \ref{Jdescription}.
\begin{proof}
Let  $s,r \in S$ and take some reduced word containing $\underbrace{rsr\ldots }_{m_{s,r}}$ as a consecutive subword, $w = s_{i_1}\ldots s_{i_\ell }\underbrace{rsr\ldots }_{m_{s,r}} s_{i_r}\ldots s_{i_j}$. Since $\varphi$ is an embedding, $s$ and $r$ are distinct if and only if $\varphi(s)$ and $\varphi(r)$ are distinct also. Moreover, we must have $\mathrm{B}_\varphi(us) \ne \mathrm{B}_\varphi(ur)$ for all $u \in W$. 

Write $v  = s_{i_1}\ldots s_{i_\ell}$. Lemma \ref{col reps} shows that support intervals determine exactly the points at which $\mathrm{B}_\varphi(v s))$ and $\mathrm{B}_\varphi(v r))$ differ from $\mathrm{B}_\varphi(v ))$ respectively. Let 
\begin{align*}
    \mathrm{I}_\varphi(s) &= \bigcup_{m=1}^{k'}\{a'_m,a'_m+1,\ldots,b'_m\}, \\
    \mathrm{I}_\varphi(r) &= \bigcup_{j=1}^{h'}\{c'_j,c'_j+1,\ldots,d'_j\}
\end{align*}
be the disjoint forms of $\mathrm{I}_\varphi(s)$ and $\mathrm{I}_\varphi(r)$ respectively. 

Suppose $\mathrm{I}_\varphi(s) \cap \mathrm{I}_\varphi(r) \ne \emptyset$. First we consider the case that $\{a'_m,a'_m+1,\ldots,b'_m\} = \{c'_j,c'_j+1,\ldots,d'_j\}$ for some $m$ and $j$, and that the restricted permutations, $\varphi(s)_{|_m}$ and  $\varphi(r)_{|_j}$, are equal. Then $\varphi(v rs)_{|_m} =\varphi(v )_{|_m}$.  Now consider those edges of $\mathrm{B}_\varphi(v ), \mathrm{B}_\varphi(v s)$ and  $\mathrm{B}_\varphi(v sr)$ indexed by $\{a'_m,a'_m+1,\ldots,b'_m\}$: they are identical for $\mathrm{B}_\varphi(v )$ and $\mathrm{B}_\varphi(v sr)$. Hence it cannot be true that $\mathrm{B}_\varphi(v ) \prec\mathrm{B}_\varphi(v s) \prec \mathrm{B}_\varphi(v sr)$, contradicting Theorem \ref{lemma E-embedding} since  $v  <_R v s <_R v sr$.

Now suppose there exists some $j$ and $m$ such that $\{a'_m,a'_m+1,\ldots,b'_m\} \cap \{c'_j,c'_j+1,\ldots,d'_j\} \ne \emptyset$ and that the restricted permutations, $\varphi(s)_{|_m}$ and $\varphi(r)_{|_j}$, are not equal (they may not even be restricted to the same set). Since they are not equal, we must have points at which they differ. More specifically, Lemma \ref{col reps} shows there must exist some $$\sum_{i=1}^{\min(a'_m,c'_j)-1}\sin\left(\dfrac{((i-1)\varphi(v))(\pi - 2\alpha)}{n-1}+\alpha\right)<y<\sum_{i=1}^{\min(b'_m,d'_j)}\sin\left(\dfrac{((i-1)\varphi(v))(\pi - 2\alpha)}{n-1}+\alpha\right)$$ where, without loss of generality, $\mathrm{H}(\mathrm{B}_\varphi(v ),y)<\mathrm{H}(\mathrm{B}_\varphi(v s),y)<\mathrm{H}(\mathrm{B}_\varphi(v r),y)$. We know that $v <_Rv s$ and $v <_Rv r$. Now consider the consequences if $$\mathrm{T}(s_{i_1}\ldots s_{i_\ell }\underbrace{rsr\ldots }_{m_{s,r}} s_{i_r}\ldots s_{i_j}) = \mathrm{T}(s_{i_1}\ldots s_{i_\ell }\underbrace{srs\ldots }_{m_{s,r}} s_{i_r}\ldots s_{i_j}).$$ There must be some prefix of the word $s_{i_1}\ldots s_{i_\ell }\underbrace{rsr\ldots }_{m_{s,r}} s_{i_r}\ldots s_{i_j}$, say $w_j := s_{i_1}\ldots s_{i_j}$, such that $\mathrm{H}(\mathrm{B}_\varphi(w_j)),y) = \mathrm{H}(\mathrm{B}_\varphi(v s),y)$. Necessarily, $j\ne \ell,\ell+1$. If $j< \ell$, then this contradicts Theorem \ref{lemma E-embedding} as $w_j <_R v $ but $$\mathrm{H}(\mathrm{B}_\varphi(v ),y)<\mathrm{H}(\mathrm{B}_\varphi(v s),y) = \mathrm{H}(\mathrm{B}_\varphi(w_j)),y).$$ If $j> \ell+1$, then this contradicts Theorem \ref{lemma E-embedding} again as $v r <_R w_j$ but $$\mathrm{H}(\mathrm{B}_\varphi(w_j),y) = \mathrm{H}(\mathrm{B}_\varphi(v s),y)<\mathrm{H}(\mathrm{B}_\varphi(v r),y).$$

So far, we have proved if $\{s,r\} \in \mathrm{J}_\varphi$, then $\mathrm{I}_\varphi(s) \cap \mathrm{I}_\varphi(r) \ne \emptyset$. To see that the converse is true, suppose $\mathrm{I}_\varphi(s) \cap \mathrm{I}_\varphi(r) = \emptyset$. Lemma \ref{col reps} shows that for all $y$, only one of  $\mathrm{H}(\mathrm{B}_\varphi(v s),y)$ and $\mathrm{H}(\mathrm{B}_\varphi(v r),y)$ can differ from $\mathrm{H}(\mathrm{B}_\varphi(v ),y)$. Moreover since $\mathrm{I}_\varphi(s) \cap \mathrm{I}_\varphi(r) = \emptyset$, $s$ and $r$ must commute. Hence $\mathrm{B}_\varphi(v sr) = \mathrm{B}_\varphi(v rs) =  \mathrm{B}_\varphi(v s)\cup\mathrm{B}_\varphi(v r)$ and so the result follows. 
\end{proof}

The proof of Lemma \ref{Jdescription} shows that the $\{s,r\}$ braid relations of the Coxeter group either always preserve a tiling or always alter it, regardless of the choice of reduced words.
\begin{corollary}\label{Lemma Jphi is all or nothing}
Let $\varphi: W \hookrightarrow \mathrm{Sym}(n)$ be an  E-embedding. Let $r,s \in S$, then $\{r,s\} \notin J_{\varphi}$ if and only if for all reduced words containing a consecutive subword of $\underbrace{srs\ldots }_{m_{s,r}}$, we have
$$\mathrm{T}_\varphi(s_{i_1}\ldots s_{i_\ell }\underbrace{srs\ldots }_{m_{s,r}} s_{i_r}\ldots s_{i_j}) \ne \mathrm{T}_\varphi(s_{i_1}\ldots s_{i_\ell }\underbrace{rsr\ldots }_{m_{s,r}} s_{i_r}\ldots s_{i_j}).$$
\end{corollary}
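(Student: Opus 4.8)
The plan is to deduce this dichotomy directly from Lemma~\ref{Jdescription} by observing that the criterion governing membership in $J_\varphi$ refers only to the pair $\{r,s\}$ and not to any particular reduced word. One implication is formal: if the braid relation alters the tiling for \emph{every} reduced word containing $\underbrace{srs\ldots}_{m_{s,r}}$, then it certainly alters it for at least one such word, and hence $\{r,s\}\notin J_\varphi$ by the definition of $J_\varphi$. So the substance lies in proving that $\{r,s\}\notin J_\varphi$ forces the inequality of tilings for \emph{all} such reduced words.

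For this direction I would first invoke Lemma~\ref{Jdescription} to translate $\{r,s\}\notin J_\varphi$ into the condition $\mathrm{I}_\varphi(s)\cap\mathrm{I}_\varphi(r)\neq\emptyset$. The key point is that this is a statement about the support intervals of $\varphi(s)$ and $\varphi(r)$ alone, with no reference to a word. I would then fix an arbitrary reduced word $w = s_{i_1}\ldots s_{i_\ell}\underbrace{rsr\ldots}_{m_{s,r}}s_{i_r}\ldots s_{i_j}$ with prefix $v = s_{i_1}\ldots s_{i_\ell}$ and re-run the case analysis from the proof of Lemma~\ref{Jdescription}, now reading it as an argument parametrised by $v$. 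Because $\mathrm{I}_\varphi(s)\cap\mathrm{I}_\varphi(r)\neq\emptyset$, some disjoint-form interval of $\mathrm{I}_\varphi(s)$ meets some disjoint-form interval of $\mathrm{I}_\varphi(r)$; the ``coincident interval with equal restricted permutations'' configuration was shown there to contradict Theorem~\ref{lemma E-embedding} via Lemma~\ref{col reps} (a strict height increase on a support interval cannot be squeezed to equality), so it never occurs for an E-embedding. Hence the restricted permutations on the overlapping intervals genuinely differ, and Lemma~\ref{col reps} supplies a height $y$ at which, after relabelling, $\mathrm{H}(\mathrm{B}_\varphi(v),y)<\mathrm{H}(\mathrm{B}_\varphi(vs),y)<\mathrm{H}(\mathrm{B}_\varphi(vr),y)$.

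From here I would reproduce the prefix argument: were the two tilings equal, some prefix $w_j$ of $w$ would satisfy $\mathrm{H}(\mathrm{B}_\varphi(w_j),y)=\mathrm{H}(\mathrm{B}_\varphi(vs),y)$, and comparing its position in the weak order to $v$ and $vr$ contradicts the monotonicity of heights guaranteed by Theorem~\ref{lemma E-embedding}. Since this derivation used nothing about $w$ beyond the word-independent fact $\mathrm{I}_\varphi(s)\cap\mathrm{I}_\varphi(r)\neq\emptyset$, it applies uniformly to every reduced word carrying the braid subword, giving the claimed ``all or nothing'' conclusion.

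The part requiring the most care is the one I expect to be only superficially routine: justifying that the second-case separation of heights, and the ensuing prefix contradiction, truly do not depend on the choice of $v$. The $y$-range furnished by Lemma~\ref{col reps} does shift with $\varphi(v)$, but its nonemptiness and the strict ordering of the three heights depend solely on the overlapping restricted permutations of $\varphi(s)$ and $\varphi(r)$; making this independence explicit is the real content of the corollary, since the first (coincident) case having already been excluded is precisely what rules out any ``sometimes equal, sometimes not'' behaviour.
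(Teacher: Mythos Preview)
Your proposal is correct and takes essentially the same approach as the paper: the paper's proof is the single remark that the argument for Lemma~\ref{Jdescription} already treats an arbitrary reduced word with prefix $v$, and you have simply unpacked that remark by noting that $\{r,s\}\notin J_\varphi$ translates via Lemma~\ref{Jdescription} into the word-independent condition $\mathrm{I}_\varphi(s)\cap\mathrm{I}_\varphi(r)\neq\emptyset$, after which the case analysis (with the coincident-interval case excluded by Theorem~\ref{lemma E-embedding}) runs uniformly in $v$.
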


It is now evident that the necessary bijection for Theorem~\ref{maintheorem} follows.

\end{document}